\newcommand{\sectionbreak}{\vspace{15pt}\centerline{*\quad*\quad *}\vspace{8pt}}
\newcommand{\so}{\mathfrak{so}}
\newcommand{\la}{\langle}
\newcommand{\ra}{\rangle}
\newcommand{\tr}{\mathrm{tr}}
\newcommand{\Ad}{\mathrm{Ad}}
\renewcommand{\H}{\mathbb{H}}
\newcommand{\R}{\mathbb{R}}
\newcommand{\C}{\mathbb{C}}
\newcommand{\N}{\mathbb{N}}
\newcommand{\Z}{\mathbb{Z}}
\newcommand{\G}{G_K^0}
\newcommand{\h}{\mathfrak{h}}
\newcommand{\g}{\mathfrak{g}}
\renewcommand{\t}{\mathfrak{t}}
\newcommand{\n}{\noindent}
\newcommand{\mn}{\medskip\noindent}
\newcommand{\sn}{\smallskip\noindent}
\newcommand{\ms}{\medskip}
\newcommand{\bn}{\bigskip\noindent}
\newtheorem{teo}{Theorem}[section]
\newtheorem{cor}[teo]{Corollary}
\newtheorem{lema}[teo]{Lemma}
\theoremstyle{definition}
\newtheorem{rmk}[teo]{Remark}
\title{On the isometric conjecture of Banach}
\author[G. Bor]{Gil Bor}
\author[L. Hern\'andez]{Luis Hern\'andez Lamoneda}
\address{CIMAT, A.P. 402, Guanajuato, Gto. 36000, Mexico}
\email{gil@cimat.mx,lamoneda@cimat.mx}
\author[V. Jim\'enez]{Valent\'{\i}n Jim\'enez-DeSantiago}
\author[L. Montejano]{Luis Montejano  Peimbert }
\address{IMATE, UNAM, Sede Juriquilla, Queretaro, Mexico}
\email{vale\_jds@hotmail.com,luis@matem.unam.mx}
\date{\today}
\begin{document}
\maketitle
\begin{abstract}
Let $V$ be a  Banach space where for fixed $n$, $1<n<\dim(V)$, all of its  $n$-dimensional subspaces are isometric. In 1932, Banach asked if under this hypothesis $V$ is  necessarily a Hilbert space. Gromov, in 1967, answered it positively for  even $n$  and all $V$. In this paper we give a  positive answer for real  $V$  and odd $n$ of the form  $n=4k+1$, with the possible exception of $n=133.$ Our proof relies on a new characterization of ellipsoids in $\R^n$, $n\geq 5$, as the only symmetric convex bodies all of whose linear hyperplane sections are linearly equivalent affine bodies of revolution.

\end{abstract}

\section{Introduction}

%
%
S. Banach asked in 1932  the following question: 

\sn

{\em Let $V$ be a Banach space, real or complex, finite or infinite dimensional, all of whose $n$-dimensional subspaces, for some fixed integer $n$, $2\leq n<\dim(V)$, are isometrically isomorphic to each other. Is it true that $V$ is a Hilbert space?} (See \cite[p.~244]{B},  or p.~152 of the English translation, remarks on Chap. XII, property (5).)

\ms
%
%

It's important to note that Banach's question\footnote{
Following a long established tradition starting with \cite{G}, we rename  Banach's  question  a `conjecture' in this article, although Banach himself, as far as we know, did not conjecture a positive answer.} is a codimension one problem: since every Banach space, all of whose subspaces of a fixed dimension $n\geq2$ are Hilbert spaces, is itself a Hilbert space\footnote{which easily follows from the elementary characterization of a norm coming from an inner product via the ``parallelogram law".}, an affirmative answer for $n$ in codimension one implies immediately an affirmative answer for $n$ in all codimensions. 

The conjecture  was proved first  for $n=2$  and  real $V$   in 1935 by Auerbach, Mazur and Ulam  \cite{AMU} and for all $n\geq 2$ and infinite dimensional real $V$ in  1959 by A.~Dvoretzky \cite{Dv}. In 1967 M.~Gromov \cite{G} proved  the conjecture for  even $n$ and all $V$, real or complex,  for odd $n$  and  real $V$  with $\dim(V)\geq n+2$, and for odd $n$ and  complex $V$ with $\dim(V)\geq 2n$ (which also proves the conjecture  for all infinite dimensional $V,$ real or complex, as noted above). It is probably worth noticing that V.~Milman \cite{Mi} extended Dvoretzky's theorem to the complex case, in particular reproving Banach's conjecture for infinite dimensional complex $V$.
A recent and very thorough account of the history of this conjecture is found  in \cite[\S6, p.~388]{So}. 
We also recommend \cite{Pe} and the notes on \S9 in \cite[p.~206]{MMO}. 

\smallskip
In this article we settle Banach's conjecture for real $V$ and `one half' of the odd $n$, by showing that: 

\medskip

\mn{\bf Main theorem. }{\em
 A real  Banach space all of whose $n$-dimensional subspaces are isometrically isomorphic to each other for some fixed odd integer $n$ of the form $n=4k+1\geq 5$, $n\neq 133$,  is a Hilbert space.}

\medskip

\begin{rmk} The reason for the strange exception $n\neq 133$ will become clearer during the proof (133 is the dimension of the exceptional Lie group $E_7$). 
\end{rmk}

\medskip

Consider the closed unit ball $B=\{ \|x\|\leq 1\}\subset V$. It is a symmetric convex body. Since a finite dimensional Banach space is a Hilbert space if and only if $B$ is an ellipsoid,  Banach's question  can be reformulated as: 

\ms

{\em Let  $B\subset \R^{N}$ be a symmetric convex body, all of whose sections by $n$-dimensional linear subspaces, for some fixed integer $n$,  $1<n<N$, are linearly equivalent. Is it true that $B$ is an ellipsoid?}
\ms

Thus, in order to prove the Main Theorem, in the sequel we show the following,

\begin{teo}\label{thm:main}  Let $B\subset  \R^{n+1}$,  $n=4k+1\geq 5$, $n\neq 133$, be a convex symmetric body,  all of whose sections by $n$-dimensional  subspaces  are linearly equivalent. Then $B$  is an ellipsoid.
\end{teo}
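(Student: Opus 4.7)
Fix a model symmetric convex body $K \subset \mathbb{R}^n$ linearly equivalent to every hyperplane section of $B$, and set $G := \{\phi \in GL(n) : \phi(K) = K\}$. Since $K$ is bounded and symmetric, $G$ is a compact subgroup of $GL(n)$ containing $-\mathrm{Id}$, and after a linear change of coordinates we may assume $G \subset O(n)$. Hyperplanes through $0$ in $\mathbb{R}^{n+1}$ are parametrised by $\mathbb{R}P^n$, over which the tautological $n$-plane bundle pulls back via the double cover $S^n \to \mathbb{R}P^n$ to the tangent bundle $TS^n$. The assignment $v \mapsto B \cap v^{\perp}$ is then a continuous family of convex bodies in the fibres of $TS^n$ all linearly isomorphic to $K$; this is precisely a reduction of the structure group of $TS^n$ from $O(n)$ to $G$.

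\textbf{Classifying the possible $G$.} The core of the proof is to establish that for $n = 4k+1 \geq 5$, $n \neq 133$, any closed subgroup $G \subsetneq O(n)$ containing $-\mathrm{Id}$ that admits a $G$-reduction of $TS^n$ must contain a full $O(n-1)$ fixing some line; equivalently, $K$ is forced to be an affine body of revolution. One proves this by listing, via the classification of compact connected Lie groups and their faithful $n$-dimensional orthogonal representations, the candidates for the identity component of $G$; each candidate is either already shown to contain such an $O(n-1)$, or is ruled out by obstruction theory for global sections of the associated coset bundle $O(n)/G$ over $S^n$ (since the Euler class of $TS^n$ vanishes for $n$ odd, one must use finer characteristic-class and homotopy invariants). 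The congruence $n \equiv 1 \pmod 4$ eliminates the quaternionic-type candidates that appear for $n \equiv 3 \pmod 4$; the single exception $n = 133 = \dim E_7$ arises from the $133$-dimensional adjoint orthogonal representation of the exceptional Lie group $E_7$, which cannot be excluded by these techniques.

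\textbf{Concluding via the Main Convex Geometry Theorem.} With every hyperplane section of $B$ now known to be an affine body of revolution, and still linearly equivalent across all hyperplanes, the new characterisation of ellipsoids stated in the abstract applies directly to $B \subset \mathbb{R}^{n+1}$ and yields that $B$ is an ellipsoid.

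\textbf{Main obstacle.} The central difficulty is concentrated in the classification step. Gromov's treatment for even $n$ exploits the non-vanishing of the Euler class of $TS^n$ to immediately preclude proper reductions; in our odd-dimensional setting that class vanishes, and one must carry out a genuine enumeration of compact Lie subgroups of $O(4k+1)$ together with refined obstruction-theoretic arguments. It is precisely this enumeration that leaves open the single exceptional case $n = 133$, where the orthogonal adjoint action of $E_7$ supplies a potential structure group that the present method cannot dismiss.
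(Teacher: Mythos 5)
Your proposal follows the same overall strategy as the paper: reduce the hyperplane-section hypothesis to a $G_K$-structure on $TS^n$, classify the possible structure groups, conclude that $K$ is an affine body of revolution, and then invoke the characterisation of ellipsoids via sections that are linearly equivalent affine bodies of revolution (Theorem~\ref{thm:rev}). The identification of the $E_7$ adjoint representation as the source of the $n=133$ exception and the use of Theorem~\ref{thm:rev} in the final step are both exactly as in the paper.

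There is, however, a genuine imprecision in your classification step. You assert that \emph{any} closed $G\subsetneq O(n)$ containing $-\mathrm{Id}$ that admits a $G$-reduction of $TS^n$ ``must contain a full $O(n-1)$.'' As a purely topological statement this is false: for $n=9$, $G=\mathrm{Spin}_7\times\{\pm\mathrm{Id}\}$ (with $\mathrm{Spin}_7$ acting on $\R^8\oplus\R$ via the spin representation) gives a valid reduction of $TS^9$ but does not contain $O(8)$. What the topology actually gives (Theorem~\ref{thm:red}) is weaker: in the reducible case $G^0$ is conjugate into $SO_{n-1}$ and acts \emph{transitively on $S^{n-2}$}; in the irreducible case $G^0=SO_n$ or $n=133$. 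The upgrade to ``$K$ is a body of revolution'' (equivalently, the \emph{full} symmetry group $G_K$ contains $O(n-1)$) requires the extra convex-geometric input that $G=G_K^0$ preserves the convex body $K$: transitivity of $G^0$ on $S^{n-2}$ then forces each affine slice $K\cap\{x_n=c\}$ to be a round ball about $ce_n$. Your sketch conflates the topological classification with this convex-geometric corollary, and without the bridging argument the chain of implications breaks. Beyond this, the ``listing candidates and ruling them out by obstruction theory'' step is where essentially all of the paper's work lies (bounds on $\dim G$, exclusion of $SO_k/SU_m/Sp_m$ as irreducible proper reductions, maximality/simplicity arguments, and a Dynkin-index computation for $n=5$); your proposal names the idea but supplies none of this.
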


\medskip

In fact, using Theorem 1 of \cite{Mo1}, one can drop the symmetry assumption on $B$ in the above reformulation, obtaining: 

\mn{\bf Main convex geometry theorem.} {\em  Let $B\subset  \R^{n+1}$,  $n=4k+1\geq 5$, $n\neq 133$, be a convex body,  all of whose sections by $n$-dimensional affine subspaces through a fixed interior point  are affinely equivalent. Then $B$  is an ellipsoid.
}

\subsection{Sketch of the proof of the main theorem}
Our proof of Theorem \ref{thm:main} combines two main ingredients: convex geometry  and  algebraic topology. To describe these, we need to  recall first some standard  definitions.

 A {\em symmetric convex body} is a compact convex subset of a finite dimensional real  vector space  with a nonempty interior, invariant under $x\mapsto -x.$  A {\em hyperplane} is a codimension 1 linear subspace. An {\em affine hyperplane} is the translation of a hyperplane by some vector. A {\em  hyperplane section} of a subset in a vector space  is its intersection with a hyperplane.  Two sets, each a subset of a vector space,   are 
%
{\em linearly} (respectively, {\em affinely}) {\em equivalent} if they can be mapped to each other by a linear (respectively, affine)  isomorphism between their 
%
%
 ambient vector spaces. An {\em ellipsoid} is a subset of a vector space which is affinely  equivalent to the unit ball in euclidean space. 
 
  A symmetric convex body $K\subset\R^n$ is a {\em symmetric body of revolution} if it admits an {\em axis of revolution}, i.e.,    a 1-dimensional linear subspace $L$ such that  each  section of $K$ by an affine hyperplane $A$ orthogonal to $L$ is  an $n-1$ dimensional closed euclidean ball in $A$, centered at $A\cap L$  (possibly empty or just a point). If $L$ is an axis of revolution of $K$ then  $L^\perp$ is the  associated  {\em hyperplane of revolution}. An {\em affine symmetric  body of revolution} is a convex body linearly equivalent to a symmetric body of revolution. The images, under the linear equivalence, of an  axis of revolution and its  associated hyperplane of revolution of the body of revolution are  an axis of revolution  and associated hyperplane of revolution of the affine body of revolution (not necessarily perpendicular anymore). Clearly, an ellipsoid centered at the origin is an affine symmetric body of revolution and any hyperplane serves as a hyperplane of revolution.

With these definitions understood, the  convex geometry result that we use in  the proof of Theorem \ref{thm:main} is the following characterization of ellipsoids. 

\begin{teo}\label{thm:rev}
A symmetric convex body $B\subset \R^{n+1}$,  $n\geq 4$,  all of whose  hyperplane sections are linearly equivalent affine  bodies of revolution,  is an ellipsoid.  
\end{teo}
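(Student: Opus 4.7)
My approach splits on a dichotomy based on the common $GL$-type of the hyperplane sections. If the common type is an ellipsoid, then every central hyperplane section of $B$ is an ellipsoid, and the classical characterization of ellipsoids (a symmetric convex body in $\R^N$, $N \geq 3$, all of whose central hyperplane sections are ellipsoids is itself an ellipsoid) immediately gives that $B$ is an ellipsoid. So, arguing for contradiction, assume the common type is a non-ellipsoidal affine body of revolution. Then for each hyperplane $H \subset \R^{n+1}$ the section $B_H := B \cap H$ has a uniquely determined axis of revolution $L(H) \subset H$ and hyperplane of revolution $W(H) \subset H$, both depending continuously on $H$.

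The key observation is a rigidity principle for $W$. The equatorial section $B \cap W(H)$ is an ellipsoid. On the other hand, a direct calculation on the profile of a non-ellipsoidal affine body of revolution shows that it admits exactly one ellipsoidal hyperplane section --- namely, its hyperplane of revolution itself. Consequently, for any hyperplane $H'$ containing $W(H)$, the ellipsoidal section $B \cap W(H) \subset B_{H'}$ forces $W(H') = W(H)$, so $W$ is constant on each pencil of hyperplanes through a fixed codim-2 subspace in its image.

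A parallel rigidity for the axis $L$ is obtained by slicing $B_H$ along a codim-2 subspace $K \subset H$ containing $L(H)$: since $\dim K = n-1 \geq 3$ for $n \geq 4$, such a section $B \cap K$ is itself a non-ellipsoidal body of revolution in $K$ with unique axis $L(H)$. Viewing $B \cap K$ inside any other hyperplane $H' \supset K$ and matching the two descriptions of its unique axis produces a strong compatibility on $L(H')$: either $L(H') \subset K$ (in which case $L(H') = L(H)$), or the transversal slicing imposes a rigid relation between $L(H')$, $W(H')$, and $K$. Iterating this propagation over all hyperplanes, one concludes that all the axes $L(H)$ must coincide with a single fixed line $L_*$; but then $L_* \subset H$ for every hyperplane $H$, which is absurd. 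Hence the main case is impossible and $B$ is an ellipsoid.

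The main obstacle is the axis propagation in the transversal case $L(H') \not\subset K$: there $B \cap K$ is still a body of revolution in $K$ but with an intrinsic axis $\tilde L$ computed from the geometry of $B_{H'}$ relative to $K$ (and in general $\tilde L \neq L(H')$). Uniqueness of the axis for non-ellipsoidal bodies of revolution forces $\tilde L$ to agree with $L(H)$, which must be parlayed into global rigidity. The dimensional hypothesis $n \geq 4$ enters precisely to guarantee $\dim K \geq 3$, so that $B \cap K$ is a bona fide non-ellipsoidal body of revolution with a unique axis of revolution.
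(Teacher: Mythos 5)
The initial dichotomy (ellipsoidal common type versus non-ellipsoidal common type, the first case handled by the classical all-sections-ellipsoids characterization) and the use of the uniqueness of the axis and hyperplane of revolution in the non-ellipsoidal case are the same as in the paper. From there, however, your argument diverges and has a genuine gap.

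The ``rigidity principle for $W$'' you state --- that a non-ellipsoidal affine body of revolution in $\R^n$ has exactly one ellipsoidal hyperplane section, namely its hyperplane of revolution --- is a nontrivial claim you do not prove. (It is in fact true, but deriving it requires slicing down to $3$-dimensional subspaces containing the axis and invoking Lemma~\ref{lema:axis}; the phrase ``direct calculation on the profile'' only covers the $\R^3$ case.) More seriously, the ``axis propagation'' step is not an argument. You yourself note that in the transversal case $L(H')\not\subset K$, the intrinsic axis $\tilde L$ of $B\cap K$ need not equal $L(H')$, and then assert the obstruction ``must be parlayed into global rigidity'' without supplying the parlay. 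The intended conclusion --- that all axes $L(H)$ coincide with one fixed line $L_*$ --- is much stronger than what the theorem needs and does not follow from anything you have written; indeed, nothing in the hypotheses points to a single preferred line, and this is precisely the kind of global statement one should not expect to reach by local slicing alone.

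What the paper does instead is topological, and this is the missing idea. Under the standing assumption that no section is an ellipsoid, the uniqueness of the axis (Lemma~\ref{lema:two}) makes $x\mapsto L_x$ a well-defined continuous map $S^n\to\R P^n$ with $L_x\subset x^\perp$ (Lemmas~\ref{lema:lata},~\ref{lema:cont}), i.e.\ a line subbundle of $TS^n$. Since line bundles on $S^n$ ($n\geq 2$) are trivial, one gets a continuous unit tangent field $\psi(x)\in L_x$; the straight-line homotopy $F(t,x)=(t\psi(x)+(1-t)x)/\|\cdot\|$ shows $\psi$ is homotopic to the identity, hence has degree one and is \emph{surjective}. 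Surjectivity is exactly the global input that your propagation argument was trying to conjure: it guarantees that for any prescribed hyperplane section $K_2=\Gamma_2\cap B$ with hyperplane of revolution $H_2$, and any line $L_1\subset H_2$, there is a section $K_1=\Gamma_1\cap B$ whose axis is $L_1$. Lemma~\ref{lema:k1} (the paper's substitute for your $W$-rigidity, proved via Lemma~\ref{lema:two} applied to the codimension-two slice $K_1\cap K_2$) then forces $K_1$ to be an ellipsoid, a contradiction. To repair your write-up you would need either to supply the degree-theoretic surjectivity argument or to find a genuinely elementary replacement for it; as it stands, the convex-geometric propagation does not close.
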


%
%
The main ingredient in the proof of this theorem is the following result, possibly of independent interest. 

\begin{teo}\label{thm:elip} Let $B\subset \R^{n+1}$, $n\geq 4$,  be a symmetric convex  body, all of whose   hyperplane sections are  affine bodies of revolution. Then, at least one of the sections is an ellipsoid. 
\end{teo}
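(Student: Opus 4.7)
My plan is to argue by contradiction. Suppose no hyperplane section of $B$ is an ellipsoid. Since in dimension $\geq 3$ a non-ellipsoidal affine body of revolution has a unique axis of revolution and a unique complementary hyperplane of revolution --- the $SO(n-1)$ rotation symmetry acts irreducibly on the axis-complement, so both its fixed line and its invariant complement are canonical --- each section $B\cap H$ canonically produces complementary subspaces $L(H) \oplus L'(H) = H$ of dimensions $1$ and $n-1$, depending continuously on $H \in Gr(n, n+1)$. Identifying $Gr(n, n+1) \cong \mathbb{RP}^n$ via $H \mapsto H^{\perp}$, these assemble into a continuous splitting $\gamma_n = \mathcal{L} \oplus \mathcal{L}'$ of the tautological $n$-plane bundle, with $\mathrm{rank}(\mathcal{L}) = 1$ and $\mathrm{rank}(\mathcal{L}') = n-1$. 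The task becomes to rule out such a splitting topologically.

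The main tool is Stiefel--Whitney classes. Writing $H^*(\mathbb{RP}^n; \Z/2) = \Z/2[\alpha]/\alpha^{n+1}$ and using $\gamma_n^{\perp} \cong \gamma_1$, one obtains $w(\gamma_n) = 1/(1+\alpha) = 1 + \alpha + \cdots + \alpha^n$. Since line bundles on $\mathbb{RP}^n$ are classified by $w_1 \in \{0, \alpha\}$, there are two cases. If $\mathcal{L}$ is trivial, then $w_n(\mathcal{L}') = w_n(\gamma_n) = \alpha^n \neq 0$, contradicting $\mathrm{rank}(\mathcal{L}') = n-1$. If instead $\mathcal{L} \cong \gamma_1$, the inclusion $\mathcal{L} \hookrightarrow \gamma_n$ is equivalent to a nowhere-vanishing section of $\mathrm{Hom}(\mathcal{L}, \gamma_n) \cong \gamma_1 \otimes \gamma_n \cong T\mathbb{RP}^n$, which is impossible for $n$ even because $\chi(\mathbb{RP}^n) = 1$. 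So for $n$ even the proof is complete.

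The hard case is $n$ odd with $\mathcal{L} \cong \gamma_1$: here $T\mathbb{RP}^n$ does carry nowhere-zero vector fields, so the splitting is not obstructed purely cohomologically. To close this case I plan to exploit extra structure the topological argument has so far ignored. The bundle $\mathcal{L}'$ inherits a canonical conformal metric from the ellipsoidal cross-sections perpendicular to each axis, and each fibre $L(H)$ carries an even radius function recording the scaling of these ellipsoids. Furthermore, when two hyperplanes $H_1, H_2$ share a codimension-two slice $P$, the section $B \cap P$ is simultaneously a sub-section of a body of revolution inside both $B \cap H_1$ and $B \cap H_2$; since hyperplane sections of a non-ellipsoidal body of revolution remain bodies of revolution only in very restricted configurations (those containing the axis, or conformally perpendicular to it), this should force rigid compatibility on the axis line field. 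Translating that rigidity back to the bundle picture, and combining it with finer invariants of $\mathcal{L}'$ (for instance its integer Euler class in $H^{n-1}(\mathbb{RP}^n; \Z) = \Z/2$, which one checks is nontrivial, or a $\widetilde{KO}(\mathbb{RP}^n)$-theoretic obstruction exploiting that $\widetilde{KO}(\mathbb{RP}^n)$ is a cyclic $2$-group), should deliver the needed contradiction. Making this last reduction precise is where I expect the main technical difficulty to lie.
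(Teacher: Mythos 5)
Your reduction to the tautological bundle $\gamma_n$ over $\mathbb{RP}^n$ is a clean reformulation, and the Stiefel--Whitney computations are correct: if $\mathcal{L}$ were trivial then $w_n(\mathcal{L}')=\alpha^n\neq 0$ would contradict $\mathrm{rank}\,\mathcal{L}'=n-1$, and if $\mathcal{L}\cong\gamma_1$ then for even $n$ the nowhere-zero section of $\gamma_1\otimes\gamma_n\cong T\mathbb{RP}^n$ contradicts $\chi(\mathbb{RP}^n)=1$. However, the case you flag as hard --- $n$ odd with $\mathcal{L}\cong\gamma_1$ --- is the decisive one, and your proposed route cannot close it. As you yourself observe, for odd $n$ the splitting $\gamma_n\cong\gamma_1\oplus\mathcal{L}'$ \emph{does} exist (equivalent to a nowhere-vanishing vector field on $\mathbb{RP}^n$, which exists because $\chi=0$), so no amount of further analysis of $\mathcal{L}'$ --- its integral Euler class, a $\widetilde{KO}(\mathbb{RP}^n)$ obstruction, or any other invariant of the bundle by itself --- can produce a contradiction: the obstruction is genuinely zero. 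Moreover, the intermediate geometric claim that hyperplane sections of a non-ellipsoidal body of revolution ``remain bodies of revolution only in very restricted configurations'' is false; every linear section of an affine body of revolution is again an affine body of revolution (this is Lemma~\ref{lema:planes}).

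What closes the odd case in the paper is not a finer obstruction but a degree argument that extracts \emph{surjectivity}, combined with a convex-geometric lemma. Lifting the axis field to $S^n$ and trivializing gives a map $\psi:S^n\to S^n$ with $\psi(x)\in L_x\subset x^\perp$; the perpendicularity $\psi(x)\perp x$ makes $t\mapsto (t\psi(x)+(1-t)x)/\|t\psi(x)+(1-t)x\|$ a homotopy from $\psi$ to the identity, so $\deg\psi=1$ and $\psi$ is surjective. This is the actual topological payoff: \emph{every} line in $\R^{n+1}$ occurs as the axis of some section. One then invokes Lemma~\ref{lema:k1}: if the axis $L_1$ of one hyperplane section lies in the hyperplane of revolution $H_2$ of another, the first section is an ellipsoid (the proof uses Lemma~\ref{lema:planes} to equip the common slice $\Gamma_1\cap\Gamma_2\cap B$ with two distinct hyperplanes of revolution, forcing it to be an ellipsoid by Lemma~\ref{lema:two}, and then Lemma~\ref{lema:axis}). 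Choosing $L_1\subset H_2$ and using surjectivity of $\psi$ to realize $L_1$ as an axis gives the contradiction. Your sketch about ``rigid compatibility'' when two hyperplanes share a codimension-two slice points vaguely toward this mechanism, but without the surjectivity step there is no way to force the needed incidence $L_1\subset H_2$ to occur, and without Lemma~\ref{lema:k1} there is nothing to derive from it.
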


Note that  in  Theorem \ref{thm:elip}, unlike Theorem \ref{thm:rev}, we do not assume that all hyperplane sections of $B$ are necessarily linearly equivalent to each other. If we add this assumption then it follows from Theorem \ref{thm:elip} that {\em all}  hyperplane sections of $B$ are ellipsoids. 
It then follows easily that {\it $B$ itself is an ellipsoid}: all hyperplane sections are Hilbert spaces and therefore $V$ itself is also one\footnote{In fact, this classical result is known to hold (in every codimension) even without the symmetry assumption on $B$ (see, e.g.,  Theorem 2.12.4 of \cite{MMO} or \cite{So}). It is an open question whether a symmetric convex body all of whose sections are affine symmetric bodies of revolution is itself an affine body of revolution (the converse of Lemma \ref{lema:planes}). 
In Remark \ref{rmk:conv} we briefly discuss this question and explain why Theorem \ref{thm:elip} may be considered as a first step towards an affirmative answer.}.

Theorem \ref{thm:elip} is proved in Section \ref{sec:rev}. The rest of the article consists of topological methods to show that, under the hypotheses of  Theorem \ref{thm:main}, all hyperplane sections of $B$ are necessarily affine  symmetric bodies  of revolution. The link to  topology  is via a  beautiful idea that traces back to the work of Gromov \cite{G}. It consists of the following key observation.

\begin{lema}\label{lema:key} Let $B\subset \R^{n+1}$ be a symmetric convex body, all of whose hyperplane sections are linearly equivalent to some fixed symmetric convex body $K\subset \R^n$. Let $G_K:=\{g\in GL_n(\R)| g(K)=K\}$ be the  {\em group of linear symmetries} of $K$. Then the structure group of  $S^n$ can be reduced to  $G_K$. 
\end{lema}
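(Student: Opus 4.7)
The plan is to interpret the claim as a reduction of the structure group of the tangent bundle $TS^n$ from $GL_n(\R)$ to $G_K$. Identifying $T_v S^n$ with the hyperplane $v^\perp\subset\R^{n+1}$ places each hyperplane section $K_v:=B\cap v^\perp$ inside the $v$-fiber of $TS^n$, so a $G_K$-reduction of the principal bundle $\mathrm{Fr}(TS^n)$ of linear frames should be buildable directly from the family $\{K_v\}_{v\in S^n}$.

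Concretely, I would define
$$P_v\ :=\ \{\phi\in\mathrm{Fr}(TS^n)_v : \phi(K)=K_v\}\ \subset\ \mathrm{Iso}(\R^n,v^\perp).$$
The hypothesis that every $K_v$ is linearly equivalent to $K$ gives $P_v\neq\emptyset$ for all $v$. The right action of $G_K$ on frames by precomposition $\phi\mapsto\phi\circ g$ preserves $P_v$ (because $g(K)=K$) and is simply transitive on it: given $\phi_1,\phi_2\in P_v$, the element $\phi_1^{-1}\phi_2\in GL_n(\R)$ satisfies $(\phi_1^{-1}\phi_2)(K)=K$, so it lies in $G_K$. Thus $P:=\bigsqcup_{v\in S^n}P_v$ has the algebraic shape of a principal $G_K$-sub-bundle of $\mathrm{Fr}(TS^n)$, i.e.\ the desired reduction, and the whole problem collapses to verifying local triviality of $P$.

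For local triviality near a given $v_0\in S^n$, I would fix $\phi_0\in P_{v_0}$, extend it to a continuous local section $\sigma:U\to\mathrm{Fr}(TS^n)$ on a neighbourhood $U$ of $v_0$, and pull back to $\R^n$: set $L_v:=\sigma(v)^{-1}(K_v)$. Then $v\mapsto L_v$ is a continuous map $U\to\mathcal{K}_n$ (symmetric convex bodies in $\R^n$ with the Hausdorff topology) with $L_{v_0}=K$ and each $L_v$ lying in the orbit $GL_n(\R)\cdot K$. A continuous local lift $v\mapsto g_v\in GL_n(\R)$ with $g_v(L_v)=K$ and $g_{v_0}=\mathrm{id}$ then converts $\sigma$ into a continuous local section $\tilde\sigma(v):=\sigma(v)\circ g_v^{-1}$ of $P$, as desired.

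Producing this lift is the only non-formal step, and I expect it to be the main (though routine) technical obstacle. It rests on two facts: that the continuous bijection $GL_n(\R)/G_K\to GL_n(\R)\cdot K\subset\mathcal{K}_n$ is a homeomorphism onto its image, and that $GL_n(\R)\to GL_n(\R)/G_K$ admits local continuous sections through which the lift may be pulled back. Both are standard consequences of $G_K$ being a closed (hence Lie) subgroup of the locally compact, second-countable Lie group $GL_n(\R)$ acting continuously on the metric space $\mathcal{K}_n$.
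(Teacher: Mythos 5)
Your construction of $P_v$ is exactly the paper's, and like the paper you reduce the whole lemma to a single point-set topological fact: local triviality of $P$, equivalently continuity of the induced section $S^n\to F(S^n)/G_K$ (the two formulations are interchangeable by \cite{Hus}, Theorem 2.3, or \cite{St}, Corollary 9.5). So the skeleton matches. Your fact (b), that $GL_n(\R)\to GL_n(\R)/G_K$ admits local continuous sections, is indeed a standard consequence of $G_K$ being a closed subgroup of a Lie group and needs no further comment.

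The one place you are too quick is fact (a). The assertion that the orbit map $GL_n(\R)/G_K\to GL_n(\R)\cdot K\subset\mathcal{K}_n$ is a homeomorphism onto its image is \emph{not} a ``standard consequence of $G_K$ being closed.'' Closedness of the stabilizer does not by itself make orbit maps into homeomorphisms: a dense one-parameter subgroup of the torus $T^2$ gives a continuous action of $\R$ on $T^2$ with trivial (hence closed, even compact) stabilizer, yet the orbit map $\R\to T^2$ is a continuous bijection onto its image that is not a homeomorphism. The general theorems you seem to be invoking (Effros-type results, or the Baire-category argument for $\sigma$-compact locally compact groups) require the orbit to be locally compact in the subspace topology, and establishing \emph{that} is essentially the same work as proving (a) directly. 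What actually makes (a) true in this setting is the specific geometry of $K$: since $K$ is bounded with nonempty interior there are $r,R>0$ with $B_r\subset K\subset B_R$, so $g_i(K)\to K$ in the Hausdorff metric forces both $\|g_i\|$ and $\|g_i^{-1}\|$ into a compact set, whence every subsequence of $\{g_iG_K\}$ has a sub-subsequence converging to $G_K$. This is precisely the boundedness/compactness argument the paper uses, once in Lemma \ref{lema:bdd} and again in the extraction of the convergent subsequence $\phi_i\to\phi$ within its proof of the present lemma. Make that argument explicit where you currently appeal to generalities, and your outline closes.
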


See Section  \ref{sec:str} below for a proof of this lemma, as well as a brief reminder about structure groups of differentiable manifolds and their reductions. Lemma \ref{lema:key} can be interpreted through the notion of  {\em a field of convex bodies} tangent to $S^n$. See, for example, Mani \cite{Ma} and \cite{Mo1}. 

Following Lemma \ref{lema:key}, our
task is to understand the possible reductions of the structure group of $S^n$ (a classical problem in topology). The results we need are contained in the next purely topological theorem which, when applied to   Lemma \ref{lema:key} with the dimension hypothesis  of Theorem \ref{thm:main}, implies that  $K$ is an affine symmetric body of revolution.

But first, another definition.  We say that a subgroup $G\subset GL_n(\R)$ is {\em reducible} if the induced action on $\R^n$ leaves invariant a $k$-dimensional linear subspace, $1<k<n$; otherwise, it is an {\em irreducible} subgroup of $GL_n(\R)$. 
(Beware of the potentially confusing  use of the notions `reducible' and `can be reduced' in the statement of the following theorem.) 

\begin{teo}\label{thm:red}Let $n\equiv 1$ mod 4, $n\geq 5$,  and suppose that  the structure group of 
$S^n$  can  be reduced  to a closed connected subgroup $G\subset SO_n$. Then: 
\begin{enumerate}[{(a)}]
\item If $G$ is reducible then it is conjugate  to a subgroup of  the standard inclusion $SO_{n-1}\subset SO_n,$ acting transitively on $S^{n-2}$.  
\item  If $G$ is irreducible then $G=SO_n$, or $n=133$ and $G\subset H\subset SO_{133},$ where $H$ is the adjoint representation of the simple exceptional Lie group $E_7$. 
\end{enumerate}
\end{teo}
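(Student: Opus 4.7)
\noindent The plan is to treat cases (a) and (b) separately, using obstruction theory for reductions of structure group together with representation theory and classification results for compact Lie groups. The common framework is this: a reduction of the structure group of $TS^n$ from $SO_n$ to a closed connected subgroup $G$ corresponds to a lifting of the classifying map $S^n\to BSO_n$ through $BG$. Because $H^*(S^n)$ is concentrated in degrees $0$ and $n$, the only possibly nonzero obstruction lies in $H^n(S^n;\pi_{n-1}(SO_n/G))=\pi_{n-1}(SO_n/G)$ and is the image of the class $[TS^n]\in\pi_{n-1}(SO_n)$; hence the reduction exists iff $[TS^n]$ lifts to $\pi_{n-1}(G)$.

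\noindent For the reducible case (a), I would begin by decomposing $\R^n=V_1\oplus\cdots\oplus V_r$ into $G$-irreducible orthogonal summands, which induces a splitting $TS^n=\xi_1\oplus\cdots\oplus\xi_r$ with $\mathrm{rank}(\xi_i)=\dim V_i$. The key step is to show that some $V_i$ has dimension $1$: once this is established, connectedness of $G\subset SO_n$ forces $G$ to act trivially on $V_i$ (as $SO(V_i)=\{1\}$), so $G$ lies in the stabilizer of the line $V_i$, namely a standard copy of $SO_{n-1}\subset SO_n$. To prove the claim, I would invoke Adams' vector fields on spheres theorem, which yields $\mathrm{span}(S^n)=1$ for $n=4k+1$. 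If instead all $\dim V_i\geq 2$, a contradiction follows: in the simplest subcase $\dim V_i=2$, the bundle $\xi_i$ is classified by $\pi_{n-1}(SO_2)=\pi_{n-1}(S^1)=0$ (valid for $n\geq 3$), hence trivial, giving two linearly independent vector fields on $S^n$ and contradicting Adams. For splittings in which all summands have rank $\geq 3$, a finer analysis of the image of $\bigoplus_i\pi_{n-1}(SO_{n_i})\to\pi_{n-1}(SO_n)$ together with the known nontriviality of $[TS^n]$ should produce the required obstruction.

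\noindent For the irreducible case (b), $G\subset SO_n$ acts irreducibly on $\R^n$ with $n=4k+1$ odd. My first observation would be that a central element of $G$ acts by a real scalar on $\R^n$ (Schur's lemma, together with the fact that the endomorphism algebra of an odd-dimensional irreducible real representation must be $\R$ rather than $\C$ or $\H$); since $-I\notin SO_n$ for $n$ odd, the center acts trivially and is therefore trivial, so $G$ is semisimple. I would then appeal to the classification of compact simple Lie groups and their irreducible real orthogonal representations of dimension $n$: besides $SO_n$ itself, the candidates are very restricted, including objects like $SO_3$ acting on $\R^{2m+1}$ through symmetric powers, certain embeddings of classical groups, and a handful of exceptional representations. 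For each proper candidate $G\subsetneq SO_n$, I would check whether $[TS^n]\in\pi_{n-1}(SO_n)$ lies in the image of $\pi_{n-1}(G)$. The homotopy-theoretic analysis rules out all proper candidates except in the exceptional dimension $n=133$, where the adjoint representation of the exceptional Lie group $E_7$ provides an irreducible embedding $E_7\subset SO_{133}$ (and $133=4\cdot 33+1$).

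\noindent The principal obstacle I anticipate lies in the irreducible case (b): it requires both the classification of irreducible real orthogonal representations of compact connected Lie groups of dimension $4k+1$ and a careful obstruction-theoretic check for each candidate. The survival of $E_7$ at $n=133$ reflects a genuine dimensional coincidence attached to the exceptional Lie algebras, and is the source of the $n\neq 133$ exception in the main theorem.
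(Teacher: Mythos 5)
Your general framework (clutching/obstruction theory: a reduction to $G$ exists iff $[TS^n]\in\pi_{n-1}(SO_n)$ lifts to $\pi_{n-1}(G)$) is the right one and is essentially equivalent to the characteristic-map viewpoint used in the paper, but the proposal has substantial gaps in both parts.

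In part~(a), your central claim --- that some irreducible summand $V_i$ must be one-dimensional --- is only verified in the special subcase where some $\dim V_i=2$; the catch-all ``for splittings in which all summands have rank $\geq 3$, a finer analysis \dots\ should produce the required obstruction'' is exactly the nontrivial content, and is left unproved. The paper handles it by citing Steenrod (\S 27.14, \S 27.18): for $n\equiv 1\pmod 4$ the structure group of $S^n$ reduces to $SO_k\times SO_{n-k}$ only if $k\in\{1,n-1\}$. More seriously, your proof of (a) stops at $G\subset SO_{n-1}$ and says nothing at all about the assertion that $G$ acts \emph{transitively on $S^{n-2}$}, which is part of the theorem and is used downstream (it is what ultimately forces $K$ to be a body of revolution). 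The paper obtains transitivity via the fibration $SO_{n-2}\to SO_{n-1}\to S^{n-2}$: a non-transitive $G\subset SO_{n-1}$ would have its projection to $S^{n-2}$ null-homotopic, hence the inclusion $G\hookrightarrow SO_{n-1}$ would factor up to homotopy through $SO_{n-2}$, contradicting the fact (again from Steenrod) that the structure group of $S^n$ does not reduce to $SO_{n-2}$.

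In part~(b), your observation that an irreducible $G\subset SO_n$ with $n$ odd has trivial center (hence is semisimple) is correct and nice, but it does not give simplicity, and semisimple non-simple groups can act irreducibly on $\R^n$ (e.g.\ via exterior tensor products). The paper bypasses this by passing to a \emph{maximal} connected closed subgroup $H$ with $G\subset H\subsetneq SO_n$ and invoking Leonard's theorem that such an $H$ must be simple; it is then enough to rule out $H$. From there, the actual case analysis --- which you compress into ``the homotopy-theoretic analysis rules out all proper candidates except $E_7$'' --- is where most of the work lies, and your proposal gives no mechanism for it. The paper needs three independent inputs: the dimension bound $\dim G\geq n-2$ (from \v{C}adek--Crabb, Prop.~3.1, via the $SO_{n-1}$-yes/$SO_{n-2}$-no dichotomy), the exclusion of the classical series $SO_k,SU_m,Sp_m$ (\v{C}adek--Crabb, Cor.~2.2), Smilga's lemma that every genuinely spinorial irreducible representation of $Spin_m$ is even-dimensional (which kills all the $Spin_m$ candidates mod 4), a table check for the five exceptional groups (where $E_7$ alone survives), and finally, for $n=5$, a Dynkin-index calculation showing the 5-dimensional irreducible $SO_3\to SO_5$ has index $10$, hence the induced map on $\pi_3$ has even cokernel and the reduction fails. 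None of these ingredients appears in your sketch, and without them the claim that all candidates except $E_7$ are excluded is unsupported.
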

We prove  Theorem \ref{thm:red} in Section \ref{sec:red} by applying to our situation some  known results from the literature about structure groups on spheres, mainly from \cite{St}, \cite{L} and \cite{CC}. 
Furthermore, for case (b) (the irreducible case), we need to supplement these results with several  facts about  the representation theory and topology of compact Lie groups.  

\sectionbreak

In summary, Theorem \ref{thm:main} is a consequence of  the above results, as follows. 
Since all hyperplane sections of $B$ are linearly equivalent to each other, they are linearly equivalent to some  fixed symmetric convex body $K\subset \R^n$.  
By Lemma \ref{lema:key}, 
the structure group of $S^n$ can be reduced 
to $G_K$. 
It is  easy to see that it can be further reduced to the  identity component $\G\subset G_K$ (Remark \ref{lema:con}). For a convex body $K$, $G_K$ (and thus $\G$) is compact (Lemma \ref{lema:bdd}) and therefore $\G$ is conjugate to a subgroup of $SO_n$
(Lemma \ref{lema:comp}); 
%
%
hence, by passing to a convex body linearly equivalent to $K$,  we can assume that $\G\subset SO_n$. Next, Theorem \ref{thm:red} applied to  $G=\G$, implies that $K$ is a symmetric body of revolution:  in case (a), $\R e_n$ is an axis of revolution  of $K$;  in case (b), $K$ is a euclidean ball. Thus all hyperplane sections of $B$ are linearly equivalent to the  symmetric  body of revolution $K$. It follows, by Theorem \ref{thm:rev}, that $B$ is an ellipsoid. \qed

\sn{\bf Acknowledgments.} We wish to thank Omar Antolin for very helpful conversations, and to Ilia Smilga for kindly contributing  Lemma \ref{lema:ilia}. LM acknowledges  support  from CONACyT under 
project 166306 and  support from PAPIIT-UNAM under project IN112614, whereas GB and LH acknowledge  support  from CONACyT under project 2017-2018-45886.

\section{Affine bodies of Revolution}\label{sec:rev}

The aim of this section is to prove Theorem \ref{thm:rev}, announced in the introduction. For that purpose, we collect here   the following   lemmas.

\subsection{Some preliminary lemmas}
The first two lemmas are quite standard, we give proofs for the convenience of the reader.

\begin{lema}\label{lema:bdd} Let $K\subset \R^n$ be a symmetric  convex body. Then its linear symmetry group $G_K=\{g\in GL_n(\R)\,|\, g(K)=K\}$ is compact. 
\end{lema}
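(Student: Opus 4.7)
The plan is to show compactness of $G_K$ by establishing closedness and boundedness in $M_n(\R)$, and invertibility of any limit point. The fact that $K$ is a symmetric convex body with nonempty interior is exactly what is needed to control operator norms from both sides.

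First I would exploit the symmetry and the nonempty interior to sandwich $K$ between two Euclidean balls: there exist $0<r<R$ such that $rB\subseteq K\subseteq RB$, where $B$ denotes the closed unit ball of $\R^n$ (centered at the origin, which lies in the interior of $K$ by symmetry). For any $g\in G_K$, applying $g$ to the inclusion $rB\subseteq K$ and using $g(K)=K\subseteq RB$ yields $g(rB)\subseteq RB$, hence $\|g\|_{\mathrm{op}}\leq R/r$. This shows $G_K$ is bounded in $M_n(\R)$. The same reasoning applied to $g^{-1}\in G_K$ gives $\|g^{-1}\|_{\mathrm{op}}\leq R/r$, a uniform lower bound on singular values that will be crucial for keeping limits invertible.

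Next I would verify closedness. Suppose $g_i\in G_K$ with $g_i\to g$ in $M_n(\R)$. By the uniform bound on $g_i^{-1}$, after passing to a subsequence, $g_i^{-1}\to h$ for some $h\in M_n(\R)$; passing to the limit in $g_ig_i^{-1}=I$ gives $gh=I$, so $g\in GL_n(\R)$ with $g^{-1}=h$. Closedness of $K$ then gives $g(K)\subseteq K$ and $g^{-1}(K)\subseteq K$, hence $g(K)=K$ and $g\in G_K$.

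Combining these two steps, $G_K$ is a closed and bounded subset of $GL_n(\R)$ sitting inside $M_n(\R)\cong\R^{n^2}$, so by Heine–Borel it is compact. The main conceptual point — and the only place where the hypotheses of the lemma are actually used — is the two-sided sandwich $rB\subseteq K\subseteq RB$; everything else is routine topology. I do not anticipate any real obstacle here, the lemma being essentially a packaging of standard facts about linear maps preserving a bounded set with interior.
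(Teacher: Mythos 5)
Your proof is correct and follows essentially the same route as the paper's: sandwich $K$ between Euclidean balls $rB\subseteq K\subseteq RB$ to obtain the two-sided operator-norm bound $\|g\|,\|g^{-1}\|\leq R/r$ for $g\in G_K$, then conclude compactness by Heine--Borel plus closedness. Your closedness argument (extracting a convergent subsequence $g_i^{-1}\to h$ and verifying $gh=I$, then $g(K)\subseteq K$ and $g^{-1}(K)\subseteq K$) is in fact slightly more explicit than the paper's, which stops at ``$g$ is invertible'' and leaves the deduction $g(K)=K$ implicit; the content is the same.
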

\begin{proof}
Let  $A_K:=\{a\in End(\R^n)\,|\, a(K)\subset K\}$. Since $K$ is closed in $\R^n$, $A_K$  is closed in  $End(\R^n)\simeq\R^{n^2}$   (this follows easily  from the continuity of matrix multiplication $End(\R^n)\times\R^n\to \R^n$). Since $K$ is bounded and $0$ is an interior point, there exist $R,r>0$ such that $B_r\subset K\subset B_R$, where $B_\rho\subset \R^n$  is the closed ball of radius $\rho$. It follows that for every $a\in A_K$, $a(B_r)\subset B_R$, hence $\|a\|\leq R/r$. Thus $A_K\subset End(\R^n)$ is also bounded and hence compact. It remains to show that  $G_K\subset A_K$  is closed. Let $g_i\in G_K$ with $g_i\to g\in End(\R^n).$ 
Since $(g_i)^{-1}\in A_K$,  $(g_i)^{-1}(B_r)\subset B_R$, hence $0<(r/R)\|v\|\leq \|g_iv\|$ for all $i$ and all $v\neq 0$. Taking $i\to\infty$ we get $0<(r/R)\|v\|\leq \|gv\|$, hence $g$ is invertible, i.e., $g\in G_K$. 
\end{proof}

\begin{lema}\label{lema:comp}Every compact subgroup $G\subset GL_n(\R)$  is conjugate to a subgroup of $O_n$.
\end{lema}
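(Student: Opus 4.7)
The plan is the classical averaging argument. Starting from the standard inner product $\langle\cdot,\cdot\rangle_0$ on $\R^n$, I would produce a new inner product which is $G$-invariant, and then observe that in any orthonormal basis for the new inner product, the action of $G$ is by orthogonal matrices; the change-of-basis matrix conjugates $G$ into $O_n$.

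More concretely, since $G$ is a compact subgroup of $GL_n(\R)$, it is a compact Hausdorff topological group and carries a (bi-invariant) Haar probability measure $dg$. Define
\[
\langle v,w\rangle_G := \int_G \langle gv,gw\rangle_0 \, dg.
\]
The first step is to check that this integral is well defined and that $\langle\cdot,\cdot\rangle_G$ is a symmetric bilinear form on $\R^n$. Positive-definiteness follows because the integrand $g\mapsto \langle gv,gv\rangle_0=\|gv\|^2$ is continuous and strictly positive for $v\neq 0$ (as $g$ is invertible), so its integral over $G$ with a positive probability measure is strictly positive. By left-invariance of $dg$, for any $h\in G$,
\[
\langle hv,hw\rangle_G = \int_G \langle ghv,ghw\rangle_0\,dg = \int_G \langle g'v,g'w\rangle_0\,dg' = \langle v,w\rangle_G,
\]
so $\langle\cdot,\cdot\rangle_G$ is $G$-invariant.

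Next, pick a basis $(e_1,\dots,e_n)$ of $\R^n$ which is orthonormal with respect to $\langle\cdot,\cdot\rangle_G$ (such a basis exists by Gram--Schmidt applied to the new inner product), and let $P\in GL_n(\R)$ be the matrix sending the standard basis to $(e_1,\dots,e_n)$. Then for every $g\in G$ and all $v,w\in\R^n$, $\langle P^{-1}gPv,P^{-1}gPw\rangle_0 = \langle gPv,gPw\rangle_G = \langle Pv,Pw\rangle_G = \langle v,w\rangle_0$, which says exactly that $P^{-1}gP\in O_n$. Hence $P^{-1}GP\subset O_n$, as required. The only mildly nontrivial input is the existence of Haar measure on a compact topological group; for $G\subset GL_n(\R)$ compact, one can alternatively avoid Haar measure by choosing any symmetric convex body $K$ with nonempty interior, replacing it by $K':=\bigcap_{g\in G} gK$, and using the John ellipsoid of $K'$, which is $G$-invariant and defines a $G$-invariant inner product by the same mechanism. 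The main conceptual step is producing the $G$-invariant inner product; once it exists, the reduction to $O_n$ is immediate.
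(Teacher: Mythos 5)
Your proof is correct and takes essentially the same averaging route the paper uses: integrate the standard inner product against the (bi-)invariant probability measure on $G$ to produce a $G$-invariant inner product, then pass to an orthonormal basis for it to conjugate $G$ into $O_n$. (One small slip: the substitution $g'=gh$ in your invariance check is a right translation, so it invokes right-invariance of the measure rather than left-invariance; since $G$ is compact the Haar measure is bi-invariant and nothing breaks. Your alternative via the John ellipsoid is exactly the ``minimal ellipsoid'' argument the paper also cites as an alternative.)
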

\begin{proof}By taking an arbitrary positive inner product on $\R^n$ (e.g., the standard inner product $\sum x_i y_i$) and averaging it over $G$ with respect to a bi-invariant measure, one obtains a $G$-invariant inner product $\la \ ,\ \ra$ on $\R^n$. Now any two inner products on $\R^n$ are linearly isomorphic  to each other, hence one can find an element $g\in GL_n(\R)$ such that $(u,v)\mapsto \la gu, gv\ra$ is the standard inner product on $\R^n$. It follows that $g^{-1}Gg\subset O_n$. For more details see, e.g.,  Prop.~3.1 on p.~36 of \cite{A}. 
\end{proof}
There is also an alternative geometric proof of Lemma \ref{lema:comp} via the notion of minimal ellipsoids, as in \cite[Lemma 1]{G}.

\begin{lema}\label{lema:two}
A symmetric  affine  body of revolution $K\subset \R^n$, $n\geq 3$, admitting two different  hyperplanes of revolution, is an  ellipsoid.
\end{lema}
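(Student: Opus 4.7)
The plan is to argue that the symmetry hypothesis forces the linear symmetry group $G_K\subset GL_n(\R)$ to contain the full rotation group $SO(n)$ in suitable coordinates; since the only $SO(n)$-invariant symmetric convex bodies are Euclidean balls, $K$ will then be an ellipsoid.

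Using Lemmas \ref{lema:bdd} and \ref{lema:comp}, I first replace $K$ by a linearly equivalent body so that $G_K\subset O(n)$ with respect to the standard inner product on $\R^n$. For each hyperplane of revolution $H_i$ ($i=1,2$), with associated axis $L_i$, let $T_i\in GL_n(\R)$ be a linear equivalence sending $K$ to a body of revolution with axis $T_i(L_i)$. The conjugate $G_i:=T_i^{-1}\,SO(n-1)\,T_i\subset G_K$, where $SO(n-1)$ is the standard rotation group of $T_i(K)$ around $T_i(L_i)$, is a closed connected subgroup isomorphic to $SO(n-1)$; it fixes $L_i$ pointwise and acts on $H_i$ as the standard (irreducible) representation of $SO(n-1)$.

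Since $G_i\subset G_K\subset O(n)$, the standard inner product is $G_i$-invariant. Because the decomposition $\R^n=L_i\oplus H_i$ is into \emph{non-isomorphic} irreducible $G_i$-representations (trivial and standard, distinct for $n\geq 3$), Schur's lemma forces these two summands to be orthogonal in the standard inner product; that is, $H_i=L_i^\perp$. A dimension count (a closed connected subgroup of $O(L_i^\perp)$ of dimension $\binom{n-1}{2}$) then identifies $G_i$ with the standard rotation group $SO(L_i^\perp)\subset SO(n)$. In particular, since $H_1\neq H_2$ we immediately obtain $L_1=H_1^\perp\neq H_2^\perp=L_2$.

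With $L_1\neq L_2$ in hand, it remains to check that the subalgebras $\mathfrak{so}(L_1^\perp),\mathfrak{so}(L_2^\perp)\subset\mathfrak{so}(n)$ generate all of $\mathfrak{so}(n)$. A direct calculation in orthonormal coordinates where $L_1=\R e_n$ and $L_2\subset\mathrm{span}(e_{n-1},e_n)$ shows that their linear sum already contains every generator $E_{ij}-E_{ji}$ except the one for the 2-plane $\mathrm{span}(L_1,L_2)$, and this last generator is recovered as a single Lie bracket of two elements of the sum. Hence $G_K$ contains the connected group $SO(n)$, so $K$ is a symmetric convex body invariant under the standard $SO(n)$-action, which forces $K$ to be a Euclidean ball centered at the origin---an ellipsoid. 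The most delicate step is the representation-theoretic argument forcing $L_i\perp H_i$ in the $G_K$-invariant inner product (and thereby $L_1\neq L_2$); the remaining Lie-algebra computation and the final round-ball conclusion are routine.
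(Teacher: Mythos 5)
Your proposal is correct and follows the same high-level strategy as the paper: conjugate $G_K$ into the orthogonal group, observe that each hyperplane of revolution contributes a copy of $SO_{n-1}$ (the stabilizer of a line) to the symmetry group, and conclude that two distinct such copies force $G_K\supset SO_n$. The difference is in how the last step is handled. The paper simply cites the well-known fact that $SO_{n-1}$ is a \emph{maximal} connected subgroup of $SO_n$ (Montgomery--Samelson, \cite[Lemma 4]{MS}), so that $SO_{n-1}\subsetneq G\subset SO_n$ with $G$ connected already forces $G=SO_n$. You instead prove the generation claim directly at the Lie algebra level, showing $\so(L_1^\perp)+\so(L_2^\perp)$ misses only the generator of the $2$-plane $\mathrm{span}(L_1,L_2)$, which is then recovered by a single bracket. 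Your Schur's-lemma step, forcing $H_i=L_i^\perp$ once $G_K\subset O_n$ and hence giving $L_1\neq L_2$ from $H_1\neq H_2$, is a detail the paper's proof leaves implicit (it asserts the subgroups are conjugate ``in $SO_n$'' to the standard $SO_{n-1}$ and that the inclusion $SO_{n-1}\subsetneq G$ is strict, but does not spell out why). In short: the paper's version is shorter by citing maximality; yours is self-contained and makes the strictness of the inclusion explicit; both are correct, and you should be comfortable that the direct computation buys you independence from the Montgomery--Samelson reference.
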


\begin{proof} Let $G_K=\{g\in GL_n(\R)\,|\, g(K)=K\}$ and 
let $G=\G$  be the identity component of  $G_K$. By Lemma \ref{lema:comp}, $G$  is conjugate to a subgroup of $SO_n$, we may assume, by passing to a body of revolution linearly equivalent to $K$, that $G\subset SO_n$. 
We will show that in this case $K$ is a ball centered at the origin, by showing that $G=SO_n$. 

Now, each hyperplane of revolution of $K$ gives rise to a subgroup of $G$  conjugate in $SO_{n}$ to $SO_{n-1}$ (the stabilizer of the hyperplane). Thus, our hypotheses imply that $SO_{n-1}\subsetneq G\subset SO_n$. But it is well known that $SO_{n-1}$ is a {\it maximal connected} subgroup of $SO_n$, i.e. $G=SO_n$ (see \cite[Lemma 4]{MS}, p. 463).

\end{proof}

\begin{lema}\label{lema:planes}
 Let $K\subset\R^n$, $n\geq 3$,  be an affine symmetric body of revolution. Then any section $K'=\Gamma\cap K$ with  a $k$-dimensional linear subspace $\Gamma\subset \R^n$,  $1<k<n$,  is an affine  symmetric body of revolution in $\Gamma$. Furthermore, if  $L$ is an axis of revolution of $K$ and $H$ the associated   hyperplane of revolution then 

\begin{enumerate}[(a)]
  
\item  If $\Gamma\subset H$ then $K'$ is an ellipsoid. 

\item  If $\Gamma\not\subset H$ then  $H':=\Gamma\cap H$ is a  hyperplane of revolution of $K'$.

\item  If  $L\subset \Gamma$  then $L$ is also the axis of revolution of $K'$ associated to the hyperplane of revolution $\Gamma\cap H$. 

\end{enumerate}
\end{lema}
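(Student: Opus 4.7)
The plan is to reduce to the case where $K$ is a genuine (not merely affine) body of revolution by passing through the linear isomorphism $\phi \in GL_n(\R)$ with $\phi(K_0) = K$, where $K_0$ has axis of revolution $L_0 := \phi^{-1}(L)$ perpendicular to the hyperplane of revolution $H_0 := \phi^{-1}(H)$. Setting $\Gamma_0 := \phi^{-1}(\Gamma)$, the section $K' = K\cap \Gamma$ equals $\phi(K_0')$ for $K_0' := K_0 \cap \Gamma_0$, so it suffices to establish the three structural claims for $K_0'$ inside $\Gamma_0$, with $L_0, H_0$ in place of $L, H$, and then transport them via $\phi$. Choose coordinates so that $L_0 = \R e_n$ and $H_0 = \{x_n = 0\}$; by definition, each slice $K_0 \cap A_t$ with $A_t := \{x_n = t\}$ is a closed euclidean ball of some radius $r(t) \geq 0$ in $A_t$, centered at $te_n$.

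For (a), if $\Gamma_0 \subset H_0$ then $K_0' = \Gamma_0 \cap (K_0 \cap H_0)$ is the intersection of a linear subspace with a euclidean ball, hence itself a euclidean ball in $\Gamma_0$; applying $\phi$ yields an ellipsoid. For (b) and (c), assume $\Gamma_0 \not\subset H_0$ and set $H_0' := \Gamma_0 \cap H_0$, which has dimension $k-1$. Pick $v = (w_0, 1) \in \Gamma_0$ complementing $H_0'$, so $\Gamma_0 = H_0' \oplus \R v$. The slice $\Gamma_0 \cap A_t$, viewed inside $\R^{n-1}$ via dropping the last coordinate, is the affine subspace $tw_0 + H_0'$; intersecting with the ball of radius $r(t)$ about the origin of $\R^{n-1}$ produces a euclidean ball in that affine subspace, centered at the foot of the perpendicular from $0$, namely $tq$, where $q$ is the orthogonal projection of $w_0$ onto $(H_0')^\perp \subset \R^{n-1}$.

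As $t$ varies, these centers sweep the line $L_0' := \R(q,1)$, which lies in $\Gamma_0$ (since $(q,1) = v - ((w_0)_{H_0'}, 0)$) and is orthogonal to $H_0'$ in the ambient inner product, giving an orthogonal decomposition $\Gamma_0 = H_0' \oplus L_0'$. Moreover, the affine hyperplanes of $\Gamma_0$ perpendicular to $L_0'$ are precisely the slices $\Gamma_0 \cap A_t$, so $L_0'$ is an axis of revolution of $K_0'$ with associated hyperplane of revolution $H_0'$. Transporting by $\phi$ proves (b), identifying $\phi(H_0') = \Gamma \cap H$ as the hyperplane of revolution of $K'$. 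For (c), if $L_0 \subset \Gamma_0$ we take $v = e_n$, whence $w_0 = 0$, $q = 0$, and $L_0' = L_0$; applying $\phi$ identifies the axis of $K'$ as $L$.

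The only real content is the elementary affine-section lemma that a ball meets an affine subspace in a ball centered at the perpendicular foot; the main obstacle to present cleanly is checking that the locus of these centers is in fact a linear subspace of $\Gamma_0$ which is orthogonal there to $H_0'$, since the inner product on $\Gamma_0$ is only the restriction of the ambient one and the axis $L_0'$ is not perpendicular to $H$ in $\R^n$. The rest is bookkeeping.
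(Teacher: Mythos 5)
Your proof is correct and follows essentially the same route as the paper: reduce to a genuine body of revolution with axis $\R e_n$ and hyperplane $\{x_n=0\}$, then identify the sections $\Gamma_0 \cap A_t \cap K_0$ as balls centered on a line and verify orthogonality. The only cosmetic difference is that the paper applies an extra rotation to normalize the complementary vector to $v = \lambda e_{n-1} + e_n$ (so the axis of $K'$ is simply $\R v$), whereas you keep a general $v=(w_0,1)$ and compute the axis directly as $\R(q,1)$ via the orthogonal projection $q$ of $w_0$ onto $(H_0')^\perp$.
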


\begin{proof}(a) If $\Gamma\subset H$ then $\Gamma\cap K$ is a linear section of the ellipsoid 
$H\cap K$, hence is an ellipsoid. 

\sn (b) We can assume,  by applying an appropriate  linear transformation,   
 that $K$ is a symmetric body of revolution with an axis of revolution $L=\R e_n$
  and plane of revolution $H=L^\perp=\{x_n=0\}$, such that $H\cap K$ is the unit ball in $H$ and  $H\pm e_n$ are support hyperplanes of $K$ at 
  $\pm e_n$. Furthermore, we can also arrange that $H':=\Gamma\cap H$ is spanned 
  by  $e_1,\ldots, e_{k-1}$ and so $\Gamma$ is spanned  by  $e_1,\ldots, e_{k-1}, v$, where $v=\lambda e_{n-1}+e_n$ for some $\lambda\in \R$. 
  To show that $H'$ is a hyperplane of revolution of $K'$ with an associated 
  axis of revolution  $L'=\R v$, we need to show that every non empty  
  section of $K'$ by an affine hyperplane of the form $H'+tv$, $t\in\R$, 
  is an $(n-2)$-dimensional  ball in  $H'+tv$, centered at $tv$. 
  The latter section is the section of the $(n-1)$-dimensional   
  ball $(H+te_n)\cap K$, centered at $te_n$,  by $H'+tv$, an 
  affine hyperplane of $H+te_n$, hence is an $(n-2)$-dimensional 
   ball, centered at $tv$, as needed.
   
   \sn(c) In the previous item, if $L\subset \Gamma$, we can choose $v=e_n$. 
\end{proof}

\begin{lema}\label{lema:axis} Let $K\subset\R^n$, $n\geq 3$,  be an affine  symmetric  body of revolution with an axis of revolution $L$. Suppose a section of $K$ by a linear subspace $\Gamma\subset \R^n$ of dimension $\geq 2$ passing through $L$ is an ellipsoid. Then $K$ is an ellipsoid. 
\end{lema}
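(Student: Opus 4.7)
The plan is to reduce the problem to a very concrete 2-dimensional computation. First, I would put $K$ in standard form: by the definition of an affine body of revolution, there is a linear isomorphism $T\in GL_n(\R)$ carrying $K$ onto a symmetric body of revolution whose axis $T(L)$ is perpendicular to its hyperplane of revolution. Applying $T$ preserves both the ellipsoid property of any linear section and the containment $\Gamma\supset L$, so I may replace $K$, $\Gamma$, $L$ by their images and assume $L=\R e_n$, $H=\{x_n=0\}$, and
\[
K=\{(y,t)\in H\oplus \R e_n\,:\,|y|^2\leq g(t)\}
\]
for some nonnegative (even) function $g$ on a symmetric interval, where $|\cdot|$ is the Euclidean norm on $H\simeq\R^{n-1}$.

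Since $\dim\Gamma\geq 2$ and $\Gamma\supset L$, I can pick a unit vector $u\in\Gamma\cap H$ and pass to the 2-plane $\Pi=\R u\oplus\R e_n\subset\Gamma$. The section $E:=\Pi\cap K$ is a 2-dimensional linear slice of the ellipsoid $\Gamma\cap K$, hence an ellipse centered at the origin; in $(s,t)$-coordinates on $\Pi$ it reads
\[
E=\{(s,t):s^2\leq g(t)\}.
\]
The orthogonal reflection of $H$ sending $u\mapsto -u$ and fixing $u^\perp\cap H$ is a symmetry of $K$ (as $K$ is a body of revolution about $\R e_n$), so $E$ is invariant under $(s,t)\mapsto(-s,t)$; combined with the central symmetry of $E$, this forces the principal axes of $E$ to coincide with the coordinate axes of $\Pi$.

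Consequently $E$ has the form $s^2/a^2+t^2/b^2\leq 1$ for some $a,b>0$, which reads off $g$ as the quadratic $g(t)=a^2(1-t^2/b^2)$. Substituting back gives
\[
K=\{(y,t):|y|^2/a^2+t^2/b^2\leq 1\},
\]
an ellipsoid. I do not anticipate any serious obstacle here; the content is the 2D observation that an ellipse cross-section of a surface of revolution pins down a quadratic profile. The only point requiring any care is the initial reduction, in which the single change of coordinates $T$ must simultaneously bring $L$ and its associated hyperplane into orthogonal position without disturbing the ellipsoidal nature of $\Gamma\cap K$; this is immediate because linear isomorphisms preserve both ellipsoids and the incidence $L\subset\Gamma$.
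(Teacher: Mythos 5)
Your proof is correct and takes essentially the same route as the paper's: both reduce to a $2$-dimensional linear slice through $L$, pin down that slice to an axis-aligned ellipse, and then recover $K$ from the resulting quadratic profile via rotational invariance about $L$. The only cosmetic difference is in how the cross-term is killed: the paper normalizes so that $H\pm e_n$ are support hyperplanes of $K$ and uses the tangency at $\pm e_n$, while you invoke the reflection $u\mapsto -u$ (a symmetry of any body of revolution about $\R e_n$); both lead to the same conclusion.
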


\begin{proof}Let $e_1, \ldots, e_{n}$ be the standard basis of $\R^{n}$.  By passing to a linearly equivalent  body of revolution, we can assume that $K$ is a symmetric body of revolution with an 
%
%
axis of revolution $L=\R e_{n}$ and associated hyperplane of revolution $H=L^\perp=\{x_{n}=0\}$. Furthermore, we can also assume that $H\cap K$ is the unit ball in $H$ and that $H\pm e_{n}$ are support hyperplanes of $K$ at $\pm e_{n}$. 
%
%
We will show that, under these assumptions,  $K$ is the unit ball in $\R^{n}$. To this end, it is enough to show that each section of $K$ by a 2 dimensional subspace  $\Delta$ containing $L$ is the  unit disk in $\Delta$ centered at the origin.  Let us choose a 2-dimensional subspace $\Delta\subset \Gamma$ containing $L$  and  a unit vector $v$ in the 1-dimensional space $\Delta\cap H$. Then $\Delta\cap K$ is a (solid) ellipse, centered at the origin, whose boundary passes through $\pm v, \pm e_{n}$, with support lines $\R v\pm e_{n}$ at $\pm e_{n}$. It follows that $\Delta\cap K$ is the unit disk  in $\Delta$ centered at the origin. Now since $L=\R e_{n}$ is an axis of revolution of $K$, all rotations in $\R^{n}$ about $L$ leave $K$ invariant. Applying all such rotations to $\Delta$, we obtain all 2-dimensional subspaces containing $L$, and each of them intersects $K$ in a unit disk centered at the origin, as needed.  
\end{proof}

\begin{lema}\label{lema:k1}
Let  $B\subset \R^{n+1}$ be a symmetric convex body, $n\geq 4$, $\Gamma_1, \Gamma_2\subset\R^{n+1}$ two distinct  hyperplanes, such that the hyperplane sections  $K_i:=\Gamma_i\cap B$, $i=1,2$,  are affine symmetric bodies of revolution, 
with  axes and associated hyperplanes of revolution $L_i,H_i$ 
(respectively). 
%
%
If $L_1\subset H_2$ then $K_1$ is an ellipsoid. 
\end{lema}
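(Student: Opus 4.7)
The plan is to produce a section of $K_1$ through the axis $L_1$ which is an ellipsoid, so that Lemma \ref{lema:axis} then forces $K_1$ itself to be an ellipsoid. Specifically, let $\Delta := \Gamma_1 \cap \Gamma_2$, a subspace of dimension $n-1 \geq 3$. Since $L_1 \subset \Gamma_1$ and $L_1 \subset H_2 \subset \Gamma_2$, we have $L_1 \subset \Delta$, so $\Delta$ is a valid subspace for Lemma \ref{lema:axis}. The task thus reduces to showing that the common section $K_{12} := \Delta \cap B = \Delta \cap K_1 = \Delta \cap K_2$ is an ellipsoid.

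To this end, I will apply Lemma \ref{lema:planes} twice with subspace $\Delta$, once to $K_1$ inside $\Gamma_1$ and once to $K_2$ inside $\Gamma_2$. From $K_1$: since $L_1 \subset \Delta$, part (c) yields that $K_{12}$ is an affine body of revolution in $\Delta$ with axis $L_1$ and associated hyperplane of revolution $F_1 := \Delta \cap H_1$. From $K_2$: either $\Delta \subset H_2$, in which case $\Delta = H_2$ by equality of dimensions and part (a) directly gives that $K_{12}$ is an ellipsoid; or $\Delta \not\subset H_2$, in which case part (b) presents $K_{12}$ as an affine body of revolution with hyperplane of revolution $F_2 := \Delta \cap H_2$ and some axis $L'$. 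Inspection of the explicit construction in the proof of Lemma \ref{lema:planes}(b) exhibits $L'$ as spanned by a vector with nonzero component along $L_2$, so $L' \not\subset H_2$; since $L_1 \subset H_2$, this forces $L_1 \neq L'$. If in addition $F_1 \neq F_2$, then Lemma \ref{lema:two} (applicable as $\dim \Delta \geq 3$) gives that $K_{12}$ is an ellipsoid.

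The main obstacle will be ruling out the remaining subcase $F_1 = F_2$. For this I plan to invoke the following uniqueness principle: \emph{for an affine body of revolution, the axis of revolution is uniquely determined by its associated hyperplane of revolution}. I would prove this inline by a short block-matrix calculation, observing that any two linear presentations of $K_{12}$ as a standard body of revolution sharing the same hyperplane of revolution differ by a map which necessarily preserves the standard axis direction (equivalently, the centers of the ellipsoidal sections by affine hyperplanes parallel to the hyperplane of revolution are intrinsic geometric points of $K_{12}$ and must trace out the axis). Granting this principle, the assumption $F_1 = F_2$ would force $L_1 = L'$, contradicting $L_1 \neq L'$ above. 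Therefore $F_1 \neq F_2$, so $K_{12}$ is an ellipsoid in every case, and Lemma \ref{lema:axis} applied to $K_1$ with the subspace $\Delta$ completes the proof.
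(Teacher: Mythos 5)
Your proof is correct and follows the paper's overall strategy exactly: set $\Delta := \Gamma_1 \cap \Gamma_2$, apply Lemma \ref{lema:planes} to each $K_i$ to exhibit $\Delta \cap H_1$ and $\Delta \cap H_2$ as hyperplanes of revolution of $K_{12} = \Delta\cap B$, conclude via Lemma \ref{lema:two} that $K_{12}$ is an ellipsoid, and finish with Lemma \ref{lema:axis}. The one place you diverge is in showing $F_1 \neq F_2$, and there your route is needlessly heavy. You introduce the auxiliary axis $L'$ produced by the construction in Lemma \ref{lema:planes}(b), prove $L' \neq L_1$, and then invoke an unproved (though true) \emph{uniqueness-of-axis} principle to rule out $F_1 = F_2$ by contradiction. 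The paper avoids all of this with a one-line observation: $L_1 \subset \Gamma_1$ and $L_1 \subset H_2$ give $L_1 \subset F_2$, while $L_1 \cap H_1 = \{0\}$ gives $L_1 \not\subset F_1$; hence $F_1 \neq F_2$ directly. This also means part (c) of Lemma \ref{lema:planes} is not needed at all --- part (b) applied to each $K_i$ already yields the two hyperplanes of revolution, without ever tracking the axes. I would encourage you to replace the uniqueness detour with this direct comparison: it is shorter, it spares you from having to formalize and prove the uniqueness principle (the ``intrinsic centers'' argument in your parenthetical is the right idea, but the ``block-matrix'' version as sketched is too vague to stand on its own), and it makes the logical dependency chain cleaner.
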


\begin{proof}Let $ E:=K_1\cap K_2.$
 We will show that $E$ is an ellipsoid. This implies, by Lemma \ref{lema:axis},  that $K_1$ is an ellipsoid, since  $ E=K_1\cap \Gamma_2$ and $\Gamma_2$ contains $L_1$, an axis of revolution of $K_1.$ 

To show that $E$ is an ellipsoid, we note first that $\Gamma_2$ does not contain $H_1$, else  $L_1,H_1\subset \Gamma_2$ would imply $\Gamma_1=L_1\oplus H_1\subset \Gamma_2$. Hence, by Lemma \ref{lema:planes}(b), $\Gamma_2\cap H_1$ is a  hyperplane of revolution of $E=\Gamma_2\cap K_1$.  

Next we look at 
$\Gamma_1\cap\Gamma_2$. This has codimension 1 in $\Gamma_2$. If it coincides with  $H_2$, then    $E=\Gamma_1\cap K_2=H_2\cap K_2$, which   is an ellipsoid, by Lemma \ref{lema:planes}(a). If $\Gamma_1\cap\Gamma_2\neq H_2$, then by Lemma \ref{lema:planes}(b), $\Gamma_1\cap H_2$ is a  hyperplane of revolution of $E=\Gamma_1\cap K_2$. 

Now $\Gamma_1\cap H_2$, 
$\Gamma_2\cap H_1$ are two distinct  hyperplanes of revolution of $E$, since $L_1$ is  contained in the first but not in the second. It follows from   Lemma \ref{lema:two}  that $E$  is an ellipsoid. 
\end{proof}

The  statement of the following lemma has appeared elsewhere 
(e.g., statement III of the proof of Theorem 2.2 of \cite{Mo2}), but we did not find a  published proof of it  (perhaps because it is intuitively clear and a hassle to prove).

\begin{lema} \label{lema:lata}  Let $B\subset\R^{n+1}$ be a symmetric convex body and $x_i\to x$ a convergent sequence in $ S^n$.   Assume  each hyperplane section $x_i^\perp\cap B$ is an affine symmetric body of revolution with an axis of revolution $L_i\subset x_i^\perp$. If  $\{L_i\}$ is a convergent sequence in $\R P^n$, $L_i\to L$, then $x^\perp\cap B$ is an  affine symmetric body of revolution with an axis of revolution $L$.
\end{lema}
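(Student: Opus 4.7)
The plan is to transport all data to the common ambient hyperplane $x^\perp$, describe each affine body of revolution $x_i^\perp\cap B$ by explicit finite-dimensional parameters, and show that, along a subsequence, these parameters converge to valid data for $K:=x^\perp\cap B$.

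First I would fix near-identity orthogonal isomorphisms $\iota_i\colon x_i^\perp\to x^\perp$ (say, the restriction of the shortest-geodesic rotation in $SO_{n+1}$ sending $x_i$ to $x$) and put $\tilde K_i:=\iota_i(x_i^\perp\cap B)$, $\tilde L_i:=\iota_i(L_i)$. The standard continuity of the section map $y\mapsto y^\perp\cap B$ on $S^n$ gives Hausdorff convergence $\tilde K_i\to K$ in $x^\perp$ and $\tilde L_i\to L$ in $\R P^{n-1}$; up to sign I can choose unit vectors $\ell_i\in\tilde L_i$ with $\ell_i\to\ell\in L$, and $K$ has non-empty interior in $x^\perp$ since $0$ is interior to $B$. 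For each $i$, the body-of-revolution structure of $\tilde K_i$ with axis $\tilde L_i$ produces (uniquely, by Schur applied to the decomposition of the $SO_{n-1}$-representation on $x^\perp$ into its trivial and standard irreducible summands) a hyperplane of revolution $\tilde H_i$, together with a positive definite quadratic form $Q_i$ on $\tilde H_i$ whose unit ball is the equator $\tilde H_i\cap\tilde K_i$, and a continuous even concave profile $\phi_i\colon[-T_i,T_i]\to[0,1]$ with $\phi_i(0)=1$, $\phi_i(\pm T_i)=0$, so that
\[
 \tilde K_i=\{\,h+t\ell_i \,:\, h\in\tilde H_i,\ Q_i(h)\le\phi_i(t)^2,\ |t|\le T_i\,\}.
\]
By compactness of the Grassmannian of hyperplanes in $x^\perp$, I would then pass to a subsequence along which $\tilde H_i\to H$.

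The main obstacle---the one non-routine step---is to verify that $H$ is still transverse to $L$, i.e., $H\oplus L=x^\perp$; a priori the parametrization could degenerate with the hyperplane of revolution rotating so as to contain $L$ in the limit. The plan is to argue by contradiction. Suppose $L\subset H$ and let $\theta_i\in[0,\pi/2]$ denote the angle between $\ell_i$ and $\tilde H_i$, so that $\theta_i\to 0$. For any $p=h+t\ell_i\in\tilde K_i$, the Euclidean distance from $p$ to $\tilde H_i$ equals $|t|\sin\theta_i\le T_i\sin\theta_i$, and the half-lengths $T_i=\tfrac12|\tilde L_i\cap\tilde K_i|$ are uniformly bounded (since $K$ is bounded). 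Hence $T_i\sin\theta_i\to 0$, so $\tilde K_i$ sits in a tubular neighborhood of $\tilde H_i$ whose width shrinks to $0$; since $\tilde H_i\to H$, the Hausdorff limit yields $K\subset H$, contradicting $K$ having non-empty interior in $x^\perp$.

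Once transversality is established, the remaining compactness is routine. Uniform ball inclusions $B_r\subset\tilde K_i\subset B_R$ (inherited from $\tilde K_i\to K$, a convex body with interior) translate into uniform two-sided bounds on the forms $Q_i$ (after pulling them back to $H$ via the near-identity projection $\tilde H_i\to H$) and uniform Lipschitz bounds on the concave profiles $\phi_i$ with uniformly bounded supports. A further subsequence yields $Q_i\to Q$ positive definite on $H$ and $\phi_i\to\phi$ uniformly. The affine body of revolution determined by the limit data $(L,H,Q,\phi)$ is then the Hausdorff limit of the $\tilde K_i$, equal to $K$, exhibiting $K$ as an affine body of revolution with axis $L$ (and hyperplane of revolution $H$), as required.
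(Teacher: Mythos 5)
Your proof is correct, but it takes a genuinely different route from the paper's. The paper projects everything to $\R^n$ along $x$, chooses linear maps $T_i\in GL_n(\R)$ normalizing each section to an honest body of revolution with axis $\R e_n$ (and equator the unit ball, poles at $\pm e_n$), proves the $T_i$ stay in a compact subset of $GL_n(\R)$ via the two-sided estimate $c\|v\|\le\|T_iv\|\le C\|v\|$, passes to a limit $T_i\to T$, and observes that the rotation invariance of the normalized bodies about $\R e_n$ survives the limit — so $T(K)$ is a body of revolution with that axis. You instead encode each $\tilde K_i$ by its hyperplane of revolution, equatorial quadratic form, and concave profile, and show these data converge along a subsequence. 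The one step in your argument with no visible counterpart in the paper is the transversality argument ruling out the degeneration $L\subset H$: this is a genuine point to address, and your estimate (the body collapses into a tubular neighborhood of $\tilde H_i$ of width $\le T_i\sin\theta_i\to 0$, contradicting that $K$ has interior) is a nice way to dispatch it. In the paper this same degeneracy is ruled out implicitly by the boundedness of $T_i$, which is arguably more economical since it bundles all the compactness into one statement. One small inaccuracy: the profiles $\phi_i$ do not in general satisfy uniform Lipschitz bounds up to the endpoints $\pm T_i$ (the ball's profile has infinite slope there); what you actually get from concavity and the uniform support/range bounds is local equicontinuity on compact subintervals of the open interval, which combined with Blaschke selection (or a diagonal argument) still yields the needed convergence. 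This is a slip in wording, not a gap.
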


\begin{proof}Let $\Gamma_i:=x_i^\perp, \Gamma:=x^\perp$, 
%
%
$K_i:=\Gamma_i\cap B, $ $K:=\Gamma\cap B$. Assume, without loss of generality, that $x=e_{n+1}$, so that $\Gamma=\R^n$. 

\mn{\bf Claim 1.} {\em $K_i\to K$ in the Hausdorf metric.}

 \mn 
 
 We postpone for the moment the proof  this claim (and the two subsequent ones). Define $\pi:\R^{n+1}\to\R^n$   by 
$(x_1,\ldots, x_{n+1})\mapsto (x_1,\ldots,x_n)$. Note that $\pi(K)=K$ and $\pi(L)=L$. 
 
\mn{\bf Claim 2.}  {\em For large enough
 $i$, $\pi|_{\Gamma_i}:\Gamma_i\to \R^n$  is a linear isomorphism.}

\mn 

%
%
We henceforth restrict to a subsequence of $\{K_i\}$ such that  each $\pi|_{\Gamma_i}$ is an isomorphism. Let $K_i':=\pi(K_i)\subset \R^n,$  $L_i':=\pi(L_i)\subset\R^n$.  Then each $K_i'\subset \R^n$ is an affine symmetric body of revolution with an axis of revolution $L_i'$, $L_i'\to L$ and $K_i'\to K$ (by Claim 1). 
%
%
By definition of affine symmetric body of revolution, there exist  linear isomorphisms  $T_i:\R^n\to \R^n$ such that $K_i'':=T_i(K_i')$ is a (honest) symmetric body of revolution. By postcomposing $T_i$ with appropriate elements of $GL_n(\R)$, we can also assume that $\R e_n=T_i(L_i')$ is an axis of revolution of $K_i''$,  that $\R^{n-1}\pm e_n$ are support hyperplanes of $K_i''$ at $\pm e_n$ and  that 
$K_i''\cap \R^{n-1}$ is the unit $n-1$ dimensional closed ball in $\R^{n-1}$,  centered at the origin.  

\mn{\bf Claim 3.}  
{\em $\{T_i\}$ is contained in a compact subset of $GL_n(\R)$.}

\mn 

It follows that there is a subsequence of $\{T_i\}$, which we rename $\{T_i\}$, converging to an element $T\in GL_n(\R)$.  Let $K'':=T(K).$ Then  $\lim K_i''=
\lim T_i(K'_i)=(\lim T_i)(\lim K'_i)=T(K)=K'', $ and $T(L)=(\lim T_i)(\lim L_i')=\lim T_i(L_i)=\R e_n$. It is thus enough to show that $\R e_n$ is an axis of revolution of $K''$. Now $\R e_n$ is an axis of revolution of each $K_i''$ hence $gK_i''=K_i''$ for all $g\in O_{n-1}$ (the elements of $O_n$ leaving  $\R e_n$ fixed). Taking the limit $i\to \infty$ we obtain $g(K'')=K''.$ Hence $\R e_n$ is an axis of revolution of $K''$.
%

\mn {\em Proof of  the 3 claims:}

\sn (1) 
%
%
Let $\Gamma\subset\R^n$ be a hyperplane and  $U\subset\R^n$ an open subset such that $\Gamma\cap B\subset U$. 
Then there is a $\delta>0$ such that $\Gamma_\delta\cap B\subset U$, where $\Gamma_\delta$ is the $\delta$-neighbourhood around $\Gamma$ (this follows since the distance between the compact $\Gamma\cap B$ and the closed $\R^{n+1}\setminus U$ is positive).

%
For $x,x'\in S^n$, let $\Gamma =x^\perp$ and $\Gamma'=x'^\perp$. For any fixed $R>0$, the ball of radius $R$ in $\Gamma'$ will be contained in $\Gamma_\delta$ provided $\Gamma$ and $\Gamma'$ are close enough 
%
%
(i.e., provided $\la x, x'\ra$ is close enough to 1). Thus $\Gamma'\cap B\subset \Gamma_\delta\cap B$ for $\Gamma$ and $\Gamma'$ sufficiently close. 

%
%
Fix an $\epsilon>0$ and take $U=K_\epsilon$; then there is $\delta >0$ such that $\Gamma_\delta\cap B\subset K_\epsilon$, but then $K_i=\Gamma_i\cap B\subset\Gamma_\delta\cap B\subset K_\epsilon$, for all $i$ sufficiently large.

The argument is symmetric, thus $K\subset (K_i)_\epsilon$ for all sufficiently large $i$.

\sn(2) $\mathrm{Ker}(\pi)=\R e_{n+1}$, hence $\mathrm{Ker}(\pi|_{\Gamma_i}) \neq 0$ if and only if
%
%
  $e_{n+1}\perp x_i$. But $x_i\to e_{n+1}$ implies $\la x_i, e_{n+1}\ra\to 1,$
hence $\la x_i, e_{n+1}\ra\neq 0$  for all $i$ sufficiently large. 

\sn (3) For each pair of constants $c,C>0$ the set of elements $A\in GL_n(\R)$ satisfying $c\|v\|\leq \|A v\|\leq C\|v\|$ for all $v\in\R^n$ is  clearly closed. It is also bounded because its elements satisfy $\|A\|\leq C$ (using the operator norm on $\mathrm{End}(\R^n)$).  It is thus enough to find constants $c,C>0$ such that  
%
%
$c\|v\|\leq \|T_i v\|\leq C\|v\|$ for all $v\in \R^n$ and all $i$. 

Denote by $B_\rho$ the closed ball in $ \R^n$ of radius $\rho$ centered at the 
origin. Then there are constants $r',R',r'',R''>0$ such that $B_{r'}\subset \pi(B)
\subset  B_{R'}$ and 
$B_{r''}\subset K_i''\subset  B_{R''}$ for all $i$. 
%
%
It follows that   $T_i(B_{r'})\subset T_i(K'_i)=K_i''\subset B_{R''},$ thus $\|T_iv\|\leq C\|v\|$ for all $v\in \R^n$ and all $i$, where $C=R''/r'.$

Next, $(T_i)^{-1} B_{r''}\subset(T_i)^{-1} ( K_i'')=K_i'\subset B_{R'}$, 
hence $\|(T_i)^{-1} w\|\leq c'\|w\|$ for all $w\in\R^n$ and all $i$, where $c'=R'/r''.$ Substituting $w=T_i v$ in the last inequality we obtain $c\| v\|\leq \|T_iv\|$ for all $v\in\R^n$ and all $i$,
%
%
 where $c=1/c'=r''/R'$. 
\end{proof}

\begin{lema}\label{lema:cont}
 Let $B\subset\R^{n+1}$ be a symmetric convex body, all of whose hyperplane sections are non-ellipsoidal affine symmetric bodies of revolution. For each $x\in S^n$ let $L_x$ be the (unique) axis of revolution of $x^\perp\cap B$. Then $x\mapsto L_x$ is a continuous function $S^n\to \R P^n$. 
\end{lema}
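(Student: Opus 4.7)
My plan is to prove sequential continuity by combining Lemma \ref{lema:lata} with a standard compactness-subsequence argument. Fix a sequence $x_i \to x$ in $S^n$; the goal is to show $L_{x_i} \to L_x$ in $\R P^n$. Since $\R P^n$ is compact, it suffices to verify that every convergent subsequence of $\{L_{x_i}\}$ has $L_x$ as its limit.

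Before doing that, I would check that $L_x$ is well-defined, i.e. that the axis of a non-ellipsoidal affine symmetric body of revolution $K = x^\perp \cap B$ is unique. Suppose $K$ admits two axes of revolution $L, L'$ with associated hyperplanes $H, H'$. If $H \neq H'$, Lemma \ref{lema:two} forces $K$ to be an ellipsoid, contradicting the hypothesis. If $H = H'$, then $L$ and $L'$ are both characterized as the locus of centers of the family of parallel ellipsoidal sections $K \cap (H + v)$, which pins down a single line; hence $L = L'$. This takes care of well-definedness.

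Now for the continuity step, extract any subsequence $\{L_{x_{i_k}}\}$ converging to some $L \in \R P^n$ (possible because $\R P^n$ is compact). The hypotheses of Lemma \ref{lema:lata} are then exactly met: we have $x_{i_k} \to x$ in $S^n$, each section $x_{i_k}^\perp \cap B$ is an affine symmetric body of revolution with axis $L_{x_{i_k}} \subset x_{i_k}^\perp$, and $L_{x_{i_k}} \to L$ in $\R P^n$. The lemma then produces $x^\perp \cap B$ as an affine symmetric body of revolution with axis of revolution $L$. By uniqueness (the previous paragraph), $L = L_x$. Since every convergent subsequence of $\{L_{x_i}\}$ has limit $L_x$, we conclude $L_{x_i} \to L_x$.

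The substantive content has already been packaged into Lemma \ref{lema:lata}, so the argument above is essentially formal once uniqueness is settled. The main point of care is the uniqueness of the axis for a non-ellipsoidal affine body of revolution, which I expect to be the only place the non-ellipsoidal hypothesis is genuinely used.
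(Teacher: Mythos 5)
Your proof is correct and follows essentially the same route as the paper: reduce to sequential continuity, exploit compactness of $\R P^n$ to pass to subsequences, apply Lemma \ref{lema:lata} to identify any subsequential limit as an axis of revolution, and conclude by uniqueness. The one place you go beyond the paper's (terse) argument is in verifying uniqueness: the paper attributes it to Lemma \ref{lema:two} alone, which literally only handles the case of two \emph{distinct} hyperplanes of revolution, whereas you also dispose of the case $H = H'$ by noting the axis is pinned down as the locus of centers of the parallel ellipsoidal slices --- a small but genuine gap-fill worth having.
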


\begin{proof} Let $x_i\to x$ be a converging sequence in $S^n$.  To show that $L_{x_i}\to L_x$ it is enough to show that $L_{x_i}$ is convergent and its limit is an axis of revolution of $x^\perp\cap B$. Since $\R P^n$ is a compact metric space, to show that $L_{x_i}$ is convergent it is enough to show that all its convergent subsequences have the same limit. To show this, it is enough to show that the limit of a convergent subsequence of $L_{x_i}$ is an axis of revolution of $x^\perp\cap B$. This is the statement of  Lemma \ref{lema:lata}. 
\end{proof}

\subsection{The proof of Theorem \ref{thm:rev}} We first show Theorem \ref{thm:elip}, i.e., assume $B\subset \R^{n+1}$ is a symmetric convex body, all of whose hyperplane sections are affine symmetric bodies of revolution, and show that at least one of the hyperplane sections is  an ellipsoid.  If none of the sections  is an ellipsoid then, by Lemma  \ref{lema:two},   for each $x\in S^n$ the  section $x^\perp\cap B$  has a unique  axis of revolution $L_x\subset x^\perp.$
By Lemma \ref{lema:cont}, $x\mapsto L_x$ defines a continuous function $S^n\to \R P^n$, i.e., a line subbundle of $TS^n$.  (Note that for even $n$ this is already a contradiction, so we proceed for odd $n$.)  Now every line bundle on $S^n$, $n\geq 2$,  is trivial, i.e., admits a non-vanishing section, hence one can find a continuous function $\psi:S^n\to S^n$ such that $\psi(x)\in L_x$ for all $x\in S^n$. Since $\psi(x)\perp x$, the function   $F(t,x):=(t\psi(x)+(1-t)x)/\|t\psi(x)+(1-t)x\|$, $0\leq t\leq 1$, is well defined (the denominator does not vanish), defining a homotopy between $\psi=F(1,\cdot)$ and the identity map $F(0,\cdot)$. It follows that $\psi$ is a degree 1 map and is thus {\em surjective}.

  Now let $\Gamma_2\cap B$ be a hyperplane section of $B$, with  hyperplane of revolution $H_2\subset \Gamma_2$. Let $L_1\subset H_2$ be any 1-dimensional subspace. Then the surjectivity of $\psi$ implies that $B$ admits a  hyperplane section $K_1=\Gamma_1\cap B$ with axis of revolution $L_1$.  By Lemma \ref{lema:k1}, $K_1$ is an ellipsoid, in contradiction to our assumption that none of the hyperplane sections of $B$ is an ellipsoid.   This completes the proof of Theorem  \ref{thm:elip}. 
  
To complete the proof of  Theorem \ref{thm:rev}, we use Theorem  \ref{thm:elip} to conclude that all hyperplane sections of $B$ are ellipsoids, and hence that $B$ itself is an ellipsoid, as needed. \qed

\begin{rmk}\label{rmk:conv}
Lemma \ref{lema:planes} says that any hyperplane section of an affine symmetric convex body of revolution $B$ is again an affine symmetric convex body of revolution. The converse of this result, as far as we know, is an open problem. Let us state a somewhat more general question:

\sn

{\em Let $B\subset\R^{n+1}$, $n\geq 4$, be a convex body containing the origin in its interior. If every hyperplane section of B  is an affine body of revolution, is $B$  necessarily an affine body of revolution?}

\sn 

An obvious necessary condition for $B$ to be an affine body of revolution is that one of its hyperplane sections is an  ellipsoid (take the hyperplane of revolution of $B$). Thus, Theorem \ref{thm:elip} can be viewed as a first step for a positive answer to the above question (at least, under the further assumption of symmetry). Since Theorem \ref{thm:elip} assumes $n\geq 4$, we dare only ask the above question under the same dimension restriction.

The case $n=2$ has a different flavour altogether, where `axis of revolution' of a plane section is replaced by `axis of symmetry'. (For example, there are convex plane regions with several different axes of symmetry which are not ellipses; this is the reason we proved  Theorem \ref{thm:elip} only for $n\geq 4$). Yet there is a result in this dimension, somewhat related to Theorem \ref{thm:elip}. It is Theorem 2.1 of \cite{Mo2}: {\em Let $B\subset\R^3$ be a convex body such that every plane section through some fixed interior point of $B$ has an axis of symmetry. Then at least one of the sections is a disk.}
\end{rmk}

\section{Structure groups of spheres}
\subsection{A reminder on structure groups of manifolds and their reduction}\label{sec:str}

First, let us recall the following basic definitions (see, for example, \S 5 of Chap.~I of \cite{KN}, or Part I of \cite{St}). 

Let $G$ be a  topological group,  $M$ a topological space and $P\to M$ a principal $G$-bundle. A {\em reduction of the structure group} of   $P\to M$  to a closed subgroup $H\subset G$ is a principal  $H$-subbundle of $P$. 
Equivalently, it is a continuous section of the bundle $P/H\to M$ associated with the left $G$-action on $G/H$.  
The {\em frame bundle} of an $n$-dimensional  differentiable manifold $M$ is the 
$GL_n(\R)$-principal  bundle $ F(M)\to M$, whose fiber at a point $x\in M$ is the set of all linear isomorphisms $\R^n\to T_xM$, with the $GL_n(\R)$ right action given by precomposition of linear maps. 
A {\em $G$-reduction} of the structure group of  a smooth $n$-manifold $M$ (or a {\em $G$-structure}) is the reduction  of the structure group $GL_n(\R)$ of its frame bundle to a  closed subgroup    $G\subset GL_n(\R)$. Equivalently, it  is given by an open  cover of $M$, together with a trivialization of the restriction of $TM$ to each of the covering open subsets, such that the transition functions between the trivializations on overlapping members of the cover take values in $G$ (Prop.~5.3 of \cite{KN}, p.~53).  






\begin{rmk}\label{lema:con}
For $M=S^n,$ there is a standard cover by two `hemispheres', intersecting along a neighborhood of the `equator' $S^{n-1}$, hence its structure group is given by a single transition function $\chi_n:S^{n-1}\to GL_n(\R)$, called the {\em characteristic map} (\S18 of \cite{St}, pp.~96-100). The structure group of  $S^n$  can be reduced  to $G$ if and only if the characteristic map $\chi_n$ is homotopic to a map whose image is contained in $G$. In particular, since $S^{n-1}$ is connected ($n\geq 2$),
if the structure group of $S^n$ can be reduced  to some closed subgroup  $G\subset GL_n(\R)$  then it can be further reduced to its  identity component $G^0\subset G$. 
\end{rmk}
Let us recall   Lemma \ref{lema:key}, announced in the introduction. It follows from Lemma 2 of \cite{G}, but since it is such a key result in this article, we offer here an alternative proof, somewhat more elementary and detailed. 

\mn{\bf Lemma \ref{lema:key}.} {\em Let $B\subset \R^{n+1}$ be a symmetric convex body, all of whose hyperplane sections are linearly equivalent to some fixed symmetric convex body $K\subset \R^n$. Let $G_K:=\{g\in GL_n(\R)| g(K)=K\}$ be the   group of linear symmetries of $K$. Then the structure group of  $S^n$ can be  reduced to  $G_K$. }

\begin{proof} Identify for each $x\in S^n$, by parallel translation in $\R^{n+1}$, the tangent space to $S^n$ at $x$ with $x^\perp\subset\R^{n+1}$ and define the set $P_x\subset F_x(S^n)$  of frames at $x$  as  the set of linear isomorphisms  $\R^n\to x^\perp$ mapping  $K$ to $x^\perp\cap B$. Note that if $\phi\in P_x$ then $P_x=\phi \,G_K$, that is, $\sigma:x\mapsto P_x$ is a section  of $F/G_K\to S^n$. In order to show that $P=\bigcup P_x\subset F$ is a $G_K$-reduction it is thus enough to show that (1) $G_K$ is a closed subgroup of $GL_n(\R)$ and (2)  $\sigma:S^n\to F/G_K$ is continuous (see, e.g., \cite{Hus}, Theorem 2.3, p.~74, or \cite{St}, Corollary 9.5, p.~43).  By Lemma \ref{lema:bdd} above, $G_K$ is a compact group, hence it is closed in $GL_n(\R)$.  

To prove the continuity of $\sigma$, it is enough to show that for every convergent sequence  $x_i\to x$  in $S^n$    there exists a subsequence of $\{\sigma(x_i)\}$ converging  to  $\sigma(x)$. 
Let $\pi:F(S^n)\to F(S^n)/G_K$ be the natural projection and choose arbitrary  lifts  $\phi_i\in P_{x_i}$ of $\sigma(x_i)$. By the continuity of $\pi$, it is enough to find a subsequence of $\{\phi_i\}$ converging to an element  $\phi\in P_x$.


Now each $\phi_i$ is a linear isomorphism $\R^n\to x_i^\perp\subset\R^{n+1}$, thus we may think of $\phi_i\in \mbox{Hom}(\R^n,\R^{n+1})$.
Since $\phi_i(K)\subset B$, with int$(K)\neq\emptyset$ and $B$ compact, and hence bounded,
 $\{\phi_i\}$ is a bounded set in $\mbox{Hom}(\R^n,\R^{n+1})$ ($K$ contains some basis $\beta$ of $\R^n$ and $\phi_i(\beta)\subset B$). Therefore, $\{\phi_i\}$ has a convergent subsequence which we denote by $\phi_i$ as well, $\phi_i\to \phi$, for some $\phi \in\mbox{Hom}(\R^n,\R^{n+1})$. It remains to show that $\phi\in P_x$, i.e., $\phi$ is a linear isomorphism $\R^n\to x^\perp$ such that  $\phi(K)=x^\perp\cap  B$. 

Let $K_i=x_i^\perp\cap B$, $K_\infty=x^\perp\cap B$. In the proof of  Lemma \ref{lema:lata} (claim 1) we showed that $x_i\to x$ implies $K_i\to K_\infty$ (in the Hausdorff metric). Thus, $\phi(K)=(\lim\phi_i)(K)=\lim(\phi_i(K))=\lim K_i=K_\infty.$ 
Since $K_\infty$ has non empty interior in $x^\perp$, $\phi(K)=K_\infty$ implies that $\phi$ is a linear isomorphism $\R^n\to x^\perp.$ Thus $\phi\in P_x$, as needed. 
\end{proof}

\subsection{Proof of Theorem \ref{thm:red}a (the reducible case).}\label{sec:red}
Suppose the structure group of $S^n$ can be reduced to a closed connected subgroup $G\subset SO_{n-1}$, acting reducibly on $\R^n$. Then $G$ is conjugate  to a closed connected subgroup 
 $G'\subset SO_k\times SO'_{n-k}\subset SO_n$ for some $k$, $n/2\leq k<n$, where $SO'_{n-k}$ denotes the subgroup of $SO_n$
   fixing $\R^{k}=\{ x_{k+1}=\ldots=x_n=0\}\subset\R^n$.  
   If $n\equiv 1$ mod 4, then such a reduction is possible only if  $k=n-1$, i.e., $G'\subset  SO_{n-1}$, 
   acting irreducibly on $\R^{n-1}$ (see  \cite{St}, \S27.14, \S27.18, 
   pp.~143-144). In particular, the structure group of $S^n$ reduces to $SO_{n-1}$ 
   but not to $SO_{n-2}$. 
We shall next prove  that $G'$ acts transitively on $S^{n-2}$ (see Corollary 3.2 of \cite{L}). 
   
Consider the standard fibration $SO_{n-2}\to SO_{n-1} \stackrel{\pi}{\to} S^{n-2}$. If $G'$ does not act transitively on $S^{n-2}$ it means that the composition 
$G'\stackrel{i}{\hookrightarrow} SO_{n-1}\stackrel{\pi}{\to} S^{n-2}$ is not surjective, and is therefore null homotopic.  Let $F:G'\times I\to   S^{n-2}$  be the homotopy.  Then, by the homotopy lifting property, there exists a map  $\widetilde{F}$  completing the diagram
$$
\begin{tikzcd}[row sep=large]
G'\arrow[swap,"I\times 0",d] \arrow[r, hook,"i"] & SO_{n-1} \arrow[d, "\pi"]\\
G'\times I\arrow[ur,"\widetilde F"] \arrow[swap,r,"F"] &S^{n-2}
\end{tikzcd}
$$
Commutativity of the diagram implies  that $\widetilde{F}(x,1)\in SO_{n-2}\subset SO_{n-1}$ for every $x\in G'$. Let $f:G'\to SO_{n-2}$ be defined by $f(x)=\widetilde{F}(x,1)$; then, up to homotopy, the following diagram commutes
$$
\begin{tikzcd}[column sep=tiny, row sep=large]
G'\arrow[dr,"f",swap] \arrow[rr, hook,"i"] & &SO_{n-1} \\
&SO_{n-2}\arrow[ur,"j",swap]&
\end{tikzcd}
$$%
But now, precomposing $j\circ f$ with the characteristic map $\chi_n: S^{n-1}\to G'$, yields a reduction of the structure group of $S^n$ to $SO_{n-2}$, which is a contradiction.
$$
\begin{tikzcd}[column sep=small, row sep=large]
S^{n-1} \arrow[r,"\chi_n"]&G'\arrow[dr,"f",swap] \arrow[rr, hook,"i"] & &SO_{n-1} \\
&&SO_{n-2}\arrow[ur,"j",swap]&
\end{tikzcd}
$$
\qed

\subsection{Proof of Theorem \ref{thm:red}b (the irreducible case)}\label{sec:irred}
We start  with the following three preliminary lemmas. 

\begin{lema}\label{lema:A}
{\it For all  $n\equiv 1$ mod 4,  $n\geq 5$,  if the structure group of $S^n$ can be  reduced to $G\subset SO_n$, then  $\dim G\geq n-2.$}
\end{lema}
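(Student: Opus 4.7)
The strategy is to translate the hypothesis into homotopy theory and combine the non-parallelizability of $S^n$ with the classification results cited in the paper.

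By Remark~\ref{lema:con}, reducibility of the structure group of $S^n$ to $G$ is equivalent to the characteristic map $\chi_n: S^{n-1} \to SO_n$ being homotopic to a map with image in $G$; equivalently, $[\chi_n]$ lies in the image of the induced map $i_*:\pi_{n-1}(G)\to \pi_{n-1}(SO_n)$. For $n = 4k+1 \geq 5$ one has $n \notin \{1,3,7\}$, so $S^n$ is not parallelizable (by Adams' theorem), giving $[\chi_n] \neq 0$. Moreover, the Radon--Hurwitz count yields $\mathrm{span}(S^n) = 1$ for $n \equiv 1 \pmod 4$, so while $\chi_n$ does lift to $SO_{n-1}$ (as $n$ is odd, using the standard nowhere-vanishing vector field), it does \emph{not} lift further to $SO_{n-2}$. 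In particular, no closed connected subgroup of $SO_{n-2}\subset SO_n$ (up to conjugation) can serve as the reduced structure group $G$.

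The plan is then to argue by contradiction: suppose $\dim G \leq n-3$. The goal is to show that every such closed connected $G \subset SO_n$ is conjugate into $SO_{n-2}\subset SO_n$ — equivalently, that the $G$-action on $\R^n$ leaves a $2$-dimensional subspace invariant — which would contradict the preceding paragraph. To do this I would first note, via part (a) of Theorem~\ref{thm:red} (which does \emph{not} rely on Lemma~\ref{lema:A}), that if $G$ acts reducibly then it is already conjugate into $SO_{n-1}$ acting transitively on $S^{n-2}$, which forces $\dim G\geq n-2$ immediately. So the interesting case is when $G$ acts irreducibly on $\R^n$; for this, I would invoke the structure-group results of \cite{St} (Ch.~IV, \S27), \cite{L}, and \cite{CC}, which severely limit which closed connected irreducible $G \subset SO_n$ can admit a lift of the characteristic class.

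The main obstacle is this last structural step, because $SO_n$ contains numerous small closed connected subgroups in principle. The argument must combine the cited classifications with the rational-homotopy fact that $\pi_{n-1}(G)\otimes\mathbb{Q}=0$ (valid because $G$ is a compact Lie group and $n-1$ is even), which forces the image of $i_*$ to consist of torsion elements, and then with a case analysis by rank and simple factors of $G$ together with the computable torsion in the map $\pi_{n-1}(SO_{n-1})\to\pi_{n-1}(SO_n)$ coming from the fibration $SO_{n-1}\hookrightarrow SO_n\to S^{n-1}$. This is where I expect most of the work to lie, and where the cited references do the real lifting.
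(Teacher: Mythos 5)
Your proposal identifies the right target fact---that for $n\equiv 1\pmod 4$ the characteristic map lifts to $SO_{n-1}$ but not to $SO_{n-2}$ (the $\mathrm{span}(S^n)=1$ observation, which is exactly the paper's citation of Steenrod \S27.14, \S27.18)---but it does not actually produce the proof. The plan "show every $G$ with $\dim G\leq n-3$ is conjugate into $SO_{n-2}$" is internally inconsistent: an \emph{irreducible} $G$ can never be conjugate into $SO_{n-2}$ (a subgroup of $SO_{n-2}$ fixes a $2$-plane, so is reducible by definition). You recognize this tension and pivot to "invoke the classification results plus rational homotopy," but that is a gesture, not an argument: the rational-homotopy observation ($\pi_{n-1}(G)\otimes\mathbb Q=0$ because $n-1$ is even) only shows $[\chi_n]$ must be torsion, which is already known since $[\chi_n]\neq 0$ lives in the torsion group $\pi_{n-1}(SO_n)$ for these $n$. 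Deciding whether $[\chi_n]$ lies in the image of $\pi_{n-1}(G)\to\pi_{n-1}(SO_n)$ for each candidate irreducible $G$ of small dimension requires group-by-group homotopy computations that you have not supplied and that would be quite involved.

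The idea you are missing is the one the paper takes from Proposition 3.1 of \cite{CC}, which makes the reducible/irreducible dichotomy irrelevant. If $\dim G=k$, regard $G$ as a $k$-dimensional CW-complex and consider the fibration $SO_{k+1}\to SO_n\to SO_n/SO_{k+1}$; the base is a Stiefel manifold whose homotopy vanishes through degree $k$, equivalently $\pi_j(SO_{k+1})\cong\pi_j(SO_n)$ for $j<k$. Obstruction theory then lifts the inclusion $G\hookrightarrow SO_n$ to a map $G\to SO_{k+1}$, since all obstructions live in $H^{j+1}(G;\pi_j(\cdot))$ with $j+1\leq k$. Composing with a reduction $S^{n-1}\to G$ of $\chi_n$ produces a reduction of the structure group of $S^n$ to the \emph{standard} $SO_{k+1}\subset SO_n$. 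Since $\chi_n$ does not lift to $SO_{n-2}$, one needs $k+1\geq n-1$, i.e.\ $\dim G\geq n-2$. This is uniform in $G$, needs no classification of subgroups, and is decisively simpler than what your proposal would require.
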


\begin{proof}  This follows readily from Proposition 3.1 of \cite{CC}, since -- as mentioned above -- the structure group of $S^n$, $n\equiv 1\mbox{ mod } 4$, may be reduced to $SO_{n-1}$ but not to $SO_{n-2}$. Given that the argument is a simple one, we include it here.

Assume that $\dim G=k<n$. We are going to show that the structure group of $S^n$ reduces to the standard $SO_{k+1}\subset SO_n$. This implies the result.

Consider the characteristic map $\chi_n :S^{n-1}\to SO_n$ of $S^n$. Assuming that the structure group of $S^n$  reduces to $G$ amounts to the existence of $f:S^{n-1}\to G$ such that the following diagram commutes up to homotopy:
$$
\begin{tikzcd}[column sep=small]
S^{n-1}\arrow[dr,"f",swap] \arrow[rr, "\chi_n"] & &SO_n \\
&G\arrow[ur,"i",swap]&
\end{tikzcd}
$$
The standard inclusion $SO_{k+1}\hookrightarrow SO_n$ induces isomorphisms $\pi_j(SO_{k+1})\simeq\pi_j(SO_n)$ for every $j<k$ (this follows immediately from the long exact sequences of the fibrations $SO_{k+1+r}\to SO_{k+2+r}\to S^{k+1+r}$ for the range of $j$'s in question).

Now, this implies that $G\hookrightarrow SO_n$ factors (up to homotopy) through $SO_{k+1}$. One way of seeing this is via obstruction theory. Think of $G$ as a CW-complex. Then the obstruction to extend the inclusion $G\hookrightarrow SO_{k+1}$ from the $j$-skeleton to the $j+1$-skeleton is a cocycle with coefficients in $\pi_j(SO_{k+1})$. But the inclusion $SO_{k+1}\hookrightarrow SO_n$ induces isomorphisms onto $\pi_j(SO_n)$ ($j<k$) where we know that the obstruction vanishes. Therefore, there is no obstruction to construct $G\to SO_{k+1}$ such that $G\to SO_{k+1}\hookrightarrow SO_n$ is homotopic to the inclusion $G\hookrightarrow SO_n$.
Hence, the structure group of $S^n$ reduces to $SO_{k+1}.$
\end{proof} 

\begin{lema}\label{lema:B} {\it If $n\geq 8$,  then the  structure group of  $S^n$ cannot be  reduced  to an irreducible subgroup $G\subsetneq SO_n$   isomorphic to   $SO_k, SU_m$ or $Sp_m$, with  $k\geq 4, m\geq 2$.}
\end{lema}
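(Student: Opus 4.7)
The plan is a case analysis by the isomorphism type of $G$ (orthogonal, unitary, or symplectic), further split by whether the irreducible inclusion $G\hookrightarrow SO_n$ realizes the standard representation of $G$ or a non-standard one. The basic quantitative tool is the bound $\dim G\geq n-2$ from Lemma~\ref{lema:A}, combined with the classification of low-dimensional irreducible real representations of the classical groups together with some classical topological obstructions on spheres.

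The standard-representation subcases are handled topologically. For $G\cong SO_k$ the standard representation gives $G=SO_n$, contradicting the properness $G\subsetneq SO_n$. For $G\cong SU_m$ the standard representation forces $n=2m$, and a reduction of the structure group of $S^{2m}$ to $SU_m$ implies a reduction to $U_m$, i.e.\ an almost complex structure on $S^{2m}$; by the classical theorem of Borel--Serre these exist only for $m=1$ and $m=3$, which is excluded by $n=2m\geq 8$. For $G\cong Sp_m$ the standard representation forces $n=4m\geq 8$, and I would invoke the analogous nonexistence of the $Sp_m\!\cdot\! Sp_1$-reduction (almost quaternionic structure) of $TS^{4m}$ for $m\geq 2$.

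For the non-standard subcases I would combine Lemma~\ref{lema:A} with lower bounds on the dimensions of irreducible real representations. For $SO_k$ with $k\geq 5$ the smallest non-trivial irreducible real representation other than the standard is the adjoint $\Lambda^2$, of dimension $k(k-1)/2=\dim SO_k$; every further irreducible representation has dimension strictly exceeding $\dim SO_k+2$ and is thus excluded by the dimension bound. The non-simple case $k=4$ requires a separate look via $SO_4\simeq(SU_2\times SU_2)/\Z_2$ and the tensor classification of its irreducibles; the bound $\dim SO_4=6\geq n-2$ forces $n\leq 8$, leaving only one further $8$-dimensional irreducible to inspect. Parallel dimension counts for $SU_m$ and $Sp_m$ reduce the remaining possibilities to their adjoint representations ($n=m^2-1$ and $n=m(2m+1)$ respectively) together with a finite list of low-rank exceptions.

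The main obstacle is precisely this borderline family: the adjoint representations $SO_k\hookrightarrow SO_{k(k-1)/2}$, $SU_m\hookrightarrow SO_{m^2-1}$ and $Sp_m\hookrightarrow SO_{m(2m+1)}$, together with the isolated low-rank exceptions, for all of which $\dim G$ is essentially equal to $n$ and Lemma~\ref{lema:A} provides no contradiction. For these I would pass to finer topology, analyzing the map $\pi_{n-1}(G)\to\pi_{n-1}(SO_n)$ induced by the given representation and verifying that the characteristic class $[\chi_n]$ of $TS^n$---nontrivial in $\pi_{n-1}(SO_n)$ for $n\notin\{1,3,7\}$---is not in its image. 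This step rests on unstable homotopy calculations for the classical groups and on representation-theoretic formulas controlling the induced map on homotopy, and is where the proof will be most delicate.
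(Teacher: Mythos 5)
The paper does not prove this lemma from scratch: it is disposed of in one line by citing Corollary~2.2 of \v{C}adek--Crabb \cite{CC}, which is exactly this statement. Your proposal is therefore a genuinely different route, and it is worth comparing, but as written it has two real gaps.

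First, your ``basic quantitative tool'' is the bound $\dim G\geq n-2$ from Lemma~\ref{lema:A}, but that lemma is stated (and proved) only for $n\equiv 1 \bmod 4$, where it comes from the fact that $TS^n$ reduces to $SO_{n-1}$ but not $SO_{n-2}$. Lemma~\ref{lema:B} asserts something for \emph{all} $n\geq 8$. For $n$ even the analogous bound is actually stronger ($\dim G\geq n-1$, since $S^n$ then has no nowhere-zero vector field), but for $n\equiv 3\bmod 4$ it is strictly weaker: $S^n$ admits more than one independent vector field (the Radon--Hurwitz number), so the obstruction-theoretic argument in Lemma~\ref{lema:A} only gives $\dim G\geq n-1-\rho(n{+}1)$. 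Consequently, for $n\equiv 3\bmod 4$ your dimension count admits many more non-standard representations than you list, including sub-adjoint ones, and the whole case analysis changes. You would need to restate your tool as a congruence-dependent bound and redo the bookkeeping, or restrict the lemma to $n\equiv 1\bmod 4$ (which, to be fair, is all the paper actually uses).

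Second, and more seriously, the proposal is not a proof but an outline: you explicitly defer the ``borderline family'' --- the adjoint embeddings $SO_k\hookrightarrow SO_{k(k-1)/2}$, $SU_m\hookrightarrow SO_{m^2-1}$, $Sp_m\hookrightarrow SO_{m(2m+1)}$, together with low-rank stragglers such as the $8$-dimensional irreducible of $SO_4$ --- to ``unstable homotopy calculations'' that you acknowledge are ``the most delicate'' step and do not carry out. These are precisely the cases where $\dim G\approx n$ and the dimension bound is silent. Note, for instance, that the faithful adjoint $SO_7\hookrightarrow SO_{21}$ has $n=21\equiv 1\bmod 4$ and satisfies conditions (ii)--(v) of the paper's later discussion; it is ruled out \emph{only} by Lemma~\ref{lema:B}, so it is not optional to handle it. Without these computations the lemma is not established. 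If you want a self-contained argument along these lines, the right move is essentially to reconstruct the proof in \cite{CC}, which combines the dimension/obstruction bound with explicit computations of the induced maps $\pi_{n-1}(G)\to\pi_{n-1}(SO_n)$ (via Dynkin indices and the structure of unstable $\pi_{n-1}(SO_n)$) --- but that is a substantial piece of work, not a remark. The standard-representation subcases you do treat (via Borel--Serre for almost complex structures and the inclusion $Sp_m\subset U_{2m}$) are correct and clean, so that part of the plan is sound.
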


\begin{proof}  This is Corollary 2.2 of \cite{CC}.
\end{proof}

\begin{lema}\label{lema:C}  For all $n\geq 2$, if the structure group of $S^n$ reduces to a closed connected irreducible {\em maximal} subgroup $H\subsetneq SO_n$, then $H$ is {\em simple}.
\end{lema}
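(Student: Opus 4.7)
Suppose, for contradiction, that $H$ is not simple. Since $H$ is compact and connected, its Lie algebra decomposes as $\h = \mathfrak{z} \oplus \h_1 \oplus \cdots \oplus \h_r$, where $\mathfrak{z}$ is the centre and each $\h_i$ is a simple ideal; non-simplicity means either $\mathfrak{z}\neq 0$ or $r\geq 2$. The plan is to combine the maximality of $H$ with a Schur-type analysis of its centralizer in $SO_n$, and to apply the resulting structural information in each of these two cases.

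First I would apply Schur's lemma to the irreducible real representation of $H$ on $\R^n$: the commutant is $\R$, $\C$ or $\H$, so its intersection with $SO_n$ is one of $\{\pm I\}, SO_2, Sp_1$, giving $Z := Z_{SO_n}(H)^0 \in \{\{e\}, SO_2, Sp_1\}$. Because $Z$ commutes with $H$, the product $H\cdot Z$ is a closed connected subgroup of $SO_n$ containing $H$, so maximality forces $H\cdot Z = H$ or $SO_n$. The case $H\cdot Z = SO_n$ is ruled out by a dimension count for $n\geq 5$: $\dim Z\leq 3$, whereas every proper maximal connected subgroup of $SO_n$ has codimension $\geq 4$. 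Hence $Z\subseteq H$, and I would case-split on the type of $Z$.

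If $Z = Sp_1$, then $Sp_1 \subseteq H$ commutes with all of $H$, placing the \emph{non-abelian} $Sp_1$ inside the centre of $H$ --- impossible. If $Z = SO_2$, a generator of its Lie algebra gives an $H$-invariant complex structure $J$ on $\R^n$, so $H\subseteq U_{n/2}\subsetneq SO_n$ and maximality forces $H=U_{n/2}$, which has a $1$-dimensional centre and is not simple. \emph{This is the main obstacle.} To eliminate it, I would use the fact that $H^2(S^n;\Z)=0$ for $n\geq 3$, so any $U_m$-reduction of the frame bundle of $S^n$ refines to an $SU_m$-reduction, which is then excluded by Lemma \ref{lema:B} whenever $n\geq 8$. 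The residual cases $n=2,6$ would require separate ad hoc treatment (and for the intended application in Theorem \ref{thm:red}(b), where $n$ is odd, the complex and quaternionic cases are vacuous by parity, so in context this step is essentially free).

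Finally, if $Z=\{e\}$ then $H$ is semisimple with finite centre, and non-simplicity forces $r\geq 2$. Writing $H = H_1\cdot H_2$ as an almost direct product of normal connected subgroups corresponding to $\h_1$ and $\h_2\oplus\cdots\oplus\h_r$, the $H_1$-isotypic decomposition of $\R^n$ (which is preserved by the commuting $H_2$, hence by all of $H$) consists of a single component by irreducibility, so $\R^n\cong V_1\otimes V_2$ with $H_i$ acting on $V_i$. Choosing $H_i$-invariant inner products on the $V_i$, the standard tensor-product embedding gives $H\subseteq SO(V_1)\cdot SO(V_2) \subsetneq SO_n$; the containment in $SO_n$ is proper whenever $\dim V_1, \dim V_2\geq 2$ (by comparing $\binom{n_1}{2}+\binom{n_2}{2}$ with $\binom{n_1 n_2}{2}$), contradicting the maximality of $H$. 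Thus $H$ must be simple.
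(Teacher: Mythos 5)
The paper offers no in-text argument for this lemma; it is a bare citation to Theorem~3 of Leonard \cite{L}. So there is nothing to compare against directly, and your proposal must stand on its own. Your Schur-lemma framework and the first two commutant branches are a reasonable reconstruction: commutant $\H$ forces a non-abelian central $Sp_1$ in $H$ (impossible), and commutant $\C$ forces $H=U_{n/2}$, which you then dispose of by lifting to $SU_{n/2}$ via $H^2(S^n;\Z)=0$ and invoking Lemma~\ref{lema:B}, modulo the small-$n$ residue you flag.

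The final branch, however, has a genuine gap, and it is exactly where the hypothesis that the reduction lives on $S^n$ has to do its work. From $\R^n\cong V_1\otimes V_2$ you get $H\subseteq SO(V_1)\cdot SO(V_2)\subsetneq SO_n$ and assert that this ``contradicts maximality'' --- but it does not, because nothing excludes $H=SO(V_1)\cdot SO(V_2)$. And that case is real: the tensor-product subgroup $SO_a\cdot SO_b\subset SO_{ab}$ (for $a,b\geq 3$) \emph{is} closed, connected, irreducible, maximal, and not simple. As a statement of pure Lie theory the lemma is therefore false; the topological hypothesis is indispensable precisely in this branch, yet your argument for it never invokes $S^n$ at all. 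A secondary omission compounds this: the $H_1$-isotypic analysis actually yields $\R^n\cong V_1\otimes_D W$ with $D=\mathrm{End}_{H_1}(V_1)\in\{\R,\C,\H\}$, and you silently take $D=\R$; the case $D=\C$ you can rule out (it would enlarge the commutant past $\R$), but $D=\H$ produces $Sp_a\cdot Sp_b\subset SO_{4ab}$, another maximal irreducible non-simple subgroup to which your $\binom{n_1}{2}+\binom{n_2}{2}$ count does not even apply. For the paper's application these failures are invisible because $n$ is odd, which kills the $\C$ and $\H$ cases by parity; but the lemma is stated for all $n\geq 2$, and what is missing --- the heart of Leonard's theorem --- is a topological obstruction showing that $TS^n$ cannot be a real or quaternionic tensor product of two lower-rank bundles. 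Your proposal does not supply that obstruction.
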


\begin{proof}  See Theorem 3 of \cite{L}. \end{proof}

We now proceed to the proof of Theorem \ref{thm:red}b, using the above three lemmas. We first treat  $n\geq 9$, then  $n=5$.

\mn{\bf The case $n\geq 9$.} Assume that $G\subset SO_n$ acts irreducibly on $\R^n$ but is not all of $SO_n$. Then it is contained in  some {\em maximal} connected closed subgroup $H$,  $G\subset H\subsetneq SO_n$. The structure group of $S^n$ then reduces to $H$, acting  also irreducibly  on $\R^n$. By  Lemma \ref{lema:C}, $H$ is  simple. By Lemma \ref{lema:B}, $H$ is  a non-classical group, i.e., it is isomorphic to either $Spin_m$, $m\geq 7,$ or one of the 5 exceptional simple Lie groups: $G_2$, $F_4$, $E_6$, $E_7$ or $E_8$. By Lemma \ref{lema:A}, $n\leq \dim H+2$.
Let $V$ be the complexification of the (irreducible) representation of $H$ on $\R^n$. Since $\dim V$ is odd, $V$  is a complex irreducible representation. 

Let us list all the properties of the pair $(H,V)$ that we have so far:
\begin{enumerate}[(i)]
\item $H$ is a non-classical compact connected group, i.e.,  $Spin_m$, $m\geq 7$, or one of the  five exceptional compact simple Lie groups. 

\item $V$ is a complex irreducible representation of $H$ of {\em real type} (i.e., the complexification of a real irreducible representation). 

\item $\dim V\equiv 1$ mod 4.

\item $\dim V \leq \dim(H)+ 2.$

\item If $H=Spin_m$, then its action on $V$ does not factor through $SO_m$. 

\end{enumerate}

We claim that these 5 conditions on the pair $(H,V)$ are {\em incompatible}, for $\dim V\geq 9$, except if $V$ is the complexified adjoint representation of $H=E_7$, in which case  $\dim V=\dim H=133\equiv 1$ mod 4. We are unable to exclude this case. 

\sn

For the exceptional groups, one can simply check (e.g., in Wikipedia) that none of them, other than $E_7$,  has a non-trivial  irreducible representation satisfying conditions (iii) and (iv). In the following table  we list the smallest irreducible representations for them; we have marked in boldface the first dimensions that are $\equiv 1$ (mod 4).

\sn
\begin{center}
\begin{tabular}{|l|c|c|c|c|c|}
 Group&  $G_2$&$F_4$  &$E_6$  &$E_7$& $E_8$  \\\hline
 $\dim H$&14  &52  &78  &133 & 248  \\\hline
\mbox{Irreps}& 7&26  &27  & 56 & 248 \\
 &14&52  & 78 &\fbox{{\bf 133}}  & 3875\\
 &27&{\bf 273}&351&912&\vdots \\
 &64& \vdots & {\bf 2925} & \vdots & {\bf 1763125}\\
 &{\bf 77}&\vdots &\vdots &\vdots &\vdots
\end{tabular}
\end{center}

\bn

For the spin groups, the next lemma  shows that conditions (iii) and (v) are incompatible. (We thank Ilia Smilga for kindly informing us about this lemma and its proof). 

 \begin{lema}\label{lema:ilia}Every   irreducible complex representation of $Spin_m$, $m\geq 3$,   which does not factor through $SO_m$ is even dimensional. 
 \end{lema}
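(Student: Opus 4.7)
The plan is to use the weight theory of $Spin_m$ with respect to a maximal torus $T$. The element $-1$ generating $\ker(Spin_m\to SO_m)$ lies in the center of $Spin_m$, so by Schur's lemma it acts as a scalar on any irreducible $V$; the non-factoring hypothesis forces that scalar to be $-1$. Thus the lemma reduces to showing: any irrep $V$ of $Spin_m$ on which $-1$ acts as $-I$ has even complex dimension.

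The first step would be to translate this central-character condition into a weight-theoretic one. Working in the standard $\epsilon$-coordinates on $T$, all weights of $V$ lie in a single coset of the root lattice $Q$ inside the weight lattice $P$. I would then verify---using the explicit description of $-1$ as an element of $T$---that the cosets on which $-1$ acts by $-1$ are exactly the ``spinor'' cosets, characterized by having every $\epsilon$-coordinate in $\mathbb{Z}+\tfrac12$.

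The second step is the key divisibility. I would exhibit a sign-change subgroup $H\subset W$ of the Weyl group acting freely on the set of spinor weights. In type $B_n$ ($m=2n+1\geq 3$) one has $W=S_n\ltimes(\mathbb{Z}/2)^n$ and I would take $H=(\mathbb{Z}/2)^n$, acting by arbitrary sign changes; in type $D_n$ ($m=2n\geq 4$, so $n\geq 2$), $H$ is the subgroup of even sign changes, of order $2^{n-1}$. In both cases every coordinate of a spinor weight is a nonzero half-integer, so no non-identity element of $H$ fixes such a weight, and each $H$-orbit on the set of weights of $V$ has size exactly $|H|$. Since weight multiplicities are $W$-invariant (hence $H$-invariant), $\dim V$ is divisible by $|H|$. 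As $|H|\geq 2$ in both types once $m\geq 3$, this yields $\dim V$ even.

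The main obstacle I anticipate is the bookkeeping in the first step: correctly matching the abstract central character of an irrep with the explicit coset description of its weights, especially keeping track of which coset(s) of $P/Q$ give $-1\mapsto -I$ in type $D_n$ (where $Z(Spin_{2n})$ can be $\mathbb{Z}/4$ or $(\mathbb{Z}/2)^2$ depending on the parity of $n$). Once this identification is in hand, the orbit argument is short, and as a bonus delivers the much stronger divisibilities $2^n$ (type $B_n$) and $2^{n-1}$ (type $D_n$), well beyond the evenness that is required.
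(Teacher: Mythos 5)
Your proposal is correct and follows essentially the same route as the paper: both arguments hinge on the observation that the central element $-1$ acts as $-\mathrm{Id}$ (by Schur's lemma plus the non-factoring hypothesis) combined with the action of a $2$-subgroup of the Weyl group built from sign changes (for type $D_n$ the two subgroups used are even identical, the even sign changes of order $2^{n-1}$). The paper proves only that the irrep has no zero weight and that the $2$-group has $0$ as its unique fixed point, hence each orbit has even size; you sharpen this to the observation that every weight lies in a spinor coset with all coordinates nonzero half-integers, so the sign-change group acts freely, which buys you the stronger divisibility $2^n$ in type $B_n$ and $2^{n-1}$ in type $D_n$ at no extra cost.
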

 
 \begin{proof} We first review some well-known general facts concerning representations of simple compact Lie groups (see, for example, \cite{A}). With each $d$-dimensional complex representation of a compact semi-simple Lie group $G$ of rank $r$  with a maximal torus $T$, one can associate its  weight system $\Omega\subset \t^*$, a subset with $d$ points (counting multiplicity). The Weyl group $W=N_G(T)/T$ acts on $\t^*$, preserving $\Omega$. Thus, to show that $d$ is even, it is enough to show the following:
 
\begin{enumerate}[\quad(a)] 
\item An irreducible non classical representation $V$ of $Spin_m$ does not have a 0 weight.
 
\item  The Weyl group of  $Spin_m$ contains a subgroup whose order is a positive power of 2, and whose only fixed point in $\t^*$ is 0. 
\end{enumerate}

Note that (a) and (b) imply that $d$  is even, since under the action of  said subgroup of $W$, say $W'$, $\Omega$ breaks into the disjoint union of $W'$-orbits, each with an even number of elements, since, by (a),  all stabilizers  are strict subgroups of  $W'$, hence  have  even index. 

\sn

To show (a), note that the  $T$ action on the 0 weight space is  trivial. Now $-1\in Spin_m$ is in $T$  (since it is central), but $-1$ must act on $V$ by $-Id$, else the $Spin_m$ action on $V$ would factor through $SO_m=Spin_m/\{\pm 1\}$.

To show (b), let us first take  $m=2k.$   Then $\R^m$ decomposes under $T$  as the direct sum of $k$ $2$-planes.  Consider the subgroup  $N'\subset SO_m$ which leaves invariant each of these 2-planes. Then $N'\simeq S(O_2\times \ldots \times O_2)$, $T\subset N' \subset N(T)$, and its image  $W'=N'/T\subset W=N(T)/T$  acts on $\t^*$ by  diagonal matrices  with entries $\pm 1$ on the diagonal, with an even number of $-1$'s.   Using this description, it is easy to show that  $W'$  has order $2^{k-1}$ and that its only  fixed point in $\t^*$ is  $0. $

For  $m=2k+1$ the argument is simpler.  Under  $T$,  $\R^m$ decomposes as a direct sum of $k$  2-planes, plus a line. We take an element in $ SO_m$  which is a reflection about a line through the origin in each of these planes, and $ (-1)^k $ in the  line. This is in $N(T)$ and acts on $\t^*$ by $-Id$, hence its image in $W$ has order 2 and its only fixed point in $\t^*$ is the origin.
\end{proof}

\n{\bf The case $n=5$.}
The only reduction  of the structure group of $S^5$ that cannot be ruled out by Lemmas \ref{lema:A},  \ref{lema:B} or \ref{lema:C} is the 5-dimensional  irreducible representation of $SO_3$. This case  is eliminated   by the next lemma. 

\begin{lema}\label{lema:D}Let  $\rho:SO_3\to SO_5$ be the irreducible 5 dimensional representation of $SO_3$. Then, for any $f:S^4\to SO_3$, the composition  $S^4\stackrel{f}{\to} SO_3\stackrel{\rho}{\to} SO_5$ is null homotopic. It follows that  the  structure group of  $S^5$ cannot be  reduced  to $\rho$.
\end{lema}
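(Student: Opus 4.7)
My plan is to reduce both assertions to showing that $\rho_*\colon\pi_4(SO_3)\to\pi_4(SO_5)$ is the zero map (both groups being $\Z/2$). This directly yields the first statement, since every $f\colon S^4\to SO_3$ then has $\rho\circ f$ nullhomotopic. For the second statement, I would invoke that the characteristic map $\chi_5\colon S^4\to SO_5$ of $TS^5$ generates $\pi_4(SO_5)$ (since $S^5$ is not parallelizable); a reduction of the structure group through $\rho$ would exhibit $\chi_5$ as $\rho\circ f$ up to homotopy, and the first assertion would then force $\chi_5\simeq\ast$, a contradiction.

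To analyze $\rho_*$, I would pass to the universal covers: the double covers $SU_2\to SO_3$ and $Sp_2\to SO_5$ induce isomorphisms on $\pi_k$ for $k\ge 2$, and since $SU_2$ is simply connected, $\rho$ lifts to a Lie-group homomorphism $\tilde\rho\colon SU_2\to Sp_2$; it therefore suffices to show $\tilde\rho_*=0$ on $\pi_4$. To pin down $\tilde\rho$: the composite $SU_2\to Sp_2\to SO_5$ must equal the $5$-dimensional irreducible real representation of $SU_2$ (the real form of $\mathrm{Sym}^4\C^2$), and matching weights shows this forces $\tilde\rho$ to be the principal embedding of $SU_2$ in $Sp_2$, namely the $4$-dimensional symplectic representation $\mathrm{Sym}^3\C^2$ (spin $3/2$).

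The main topological input is the fibration $Sp_1\hookrightarrow Sp_2\stackrel{p}{\to}S^7$ coming from the transitive $Sp_2$-action on the unit sphere of $\H^2$. Because $\pi_3(S^7)=0$, the composite $p\circ\tilde\rho\colon S^3\to S^7$ is nullhomotopic, so $\tilde\rho$ is (freely) homotopic to $i\circ g$ for some map $g\colon SU_2\to Sp_1\simeq S^3$; and because $\pi_4(S^7)=0$, the inclusion $i_*\colon\pi_3(Sp_1)\to\pi_3(Sp_2)$ is an isomorphism, so $\deg g$ equals the $\pi_3$-degree of $\tilde\rho$. This degree equals the Dynkin index of $\tilde\rho$, which I compute as $10$: the composition $SU_2\to Sp_2\hookrightarrow SU_4$ is the $4$-dimensional irrep $\mathrm{Sym}^3\C^2$, whose Dynkin index in $SU_4$ is $\binom{5}{3}=10$, and the inclusion $Sp_2\hookrightarrow SU_4$ has index $1$ (its defining $\C^4$-action is the defining representation of $Sp_2$). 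Hence $\deg g=10$.

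To conclude, $SU_2=S^3$ is an H-space, so any degree-$d$ self-map of $S^3$ is homotopic to the $d$-th power map and therefore induces multiplication by $d$ on every $\pi_k(S^3)$; in particular it acts as $d\bmod 2$ on $\pi_4(S^3)=\Z/2$. Since $\deg g=10$ is even, $g_*=0$ on $\pi_4$, so $\tilde\rho_*=i_*\circ g_*=0$, and transporting back across the double covers gives $\rho_*=0$. I expect the Dynkin-index computation to be the delicate step: the parity of the index is what the argument hinges on, and one must verify that $Sp_2\hookrightarrow SU_4$ does not drop the index from $10$ to $5$ (which would be odd and would break the proof).
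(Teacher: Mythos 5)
Your proposal is correct and arrives at the same crucial fact the paper hinges on — the Dynkin index of $\rho$ is $10$, hence \emph{even} — but your implementation is genuinely different. The paper works directly in $\pi_4(SO_5)$: it lifts $f$ to $\tilde f\colon S^4\to S^3$, notes that $[\tilde\rho]=j(\rho)\cdot(\text{generator})=2[u]\in\pi_3(SO_5)$, and applies Whitehead's bi-additivity of the composition product $\pi_3(SO_5)\times\pi_4(S^3)\to\pi_4(SO_5)$ to get $[\rho\circ f]=2([u]\circ[\tilde f])=0$ in $\Z/2$. You instead pass to the simply connected covers $SU_2\to Sp_2\ (\cong Spin_5)$, and use the fibration $Sp_1\hookrightarrow Sp_2\to S^7$ together with $\pi_3(S^7)=\pi_4(S^7)=0$ to factor $\tilde\rho$ up to homotopy through a degree-$10$ self-map $g$ of $S^3$; then the H-space structure of $S^3$ shows that $g$ acts by multiplication by $10\equiv 0$ on $\pi_4(S^3)\cong\Z/2$. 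The last step is, in disguise, the same Whitehead bi-additivity applied to the H-space $S^3$ rather than to $SO_5$, but the $Sp_2$-detour is a genuinely different geometric reduction exploiting the exceptional isomorphism $Spin_5\cong Sp_2$. Your Dynkin-index computation also differs: the paper normalizes Killing forms directly, while you compose $SU_2\to Sp_2\hookrightarrow SU_4$ and use the combinatorial formula $\binom{n+1}{3}$ for $\mathrm{Sym}^{n-1}\C^2$, relying on $j(Sp_2\hookrightarrow SU_4)=1$. You correctly flagged that last fact as the delicate point; it does hold, and the cleanest way to see it in the homotopy-theoretic framework you are already using is the diffeomorphism $SU_4/Sp_2\cong S^5$: the long exact sequence of $Sp_2\to SU_4\to S^5$ gives $\pi_3(Sp_2)\stackrel{\cong}{\to}\pi_3(SU_4)$ since $\pi_3(S^5)=\pi_4(S^5)=0$, so the cokernel is trivial and the index is $1$. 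Both proofs work; yours is a bit longer but more geometric, and it makes the appearance of the number $10$ (as the degree of an actual self-map of $S^3$) vivid.
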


\begin{proof} Since  the tangent bundle of $S^5$ is not trivial, the characteristic map $\chi_5:S^4\to SO_5$  is not null-homotopic. Consequently, to show that the structure group of $S^5$ cannot be reduced to $\rho$ it is enough to show that  any  composition  $S^4\stackrel{f}{\to} SO_3\stackrel{\rho}{\to} SO_5$ is null homotopic. To show this, we use the following three claims.

 \begin{enumerate}[ \quad(a)]
 \item $\pi_3(S^3)\simeq\pi_3(SO_3)\simeq \pi_3(SO_5)\simeq \Z$, $\pi_4(S^3)\simeq\pi_4(SO_3)\simeq \pi_4(SO_5)\simeq \Z_2$. 
 
 \item The map $\rho_*: \pi_3(SO_3)\to \pi_3(SO_5)$  has a  cyclic cokernel  of  {\em  even} order (the `Dynkin index' of $\rho$).  
 
 \item For any topological group $G$ and integers $k,n\geq 2$, the composition of maps $S^n\to S^k\to G$ defines a {\em bi-additive} map  $\pi_k(G)\times \pi_n(S^k)\to \pi_n(G)$, $([f],[g])\mapsto [f]\circ [g]:=[f\circ g]$ (the `composition product'). 
 \end{enumerate}

Claim (a) is standard (see, e.g., \cite{I}, Vol.~2, App.~A, Table 6.VII, p.~1745). Claim (b) is a straightforward Lie algebraic calculation, see next subsection. For claim (c), see \cite{W}, Theorem (8.3), p.~ 479.

%
%
 Now let $f:S^4\to SO_3$ be any (pointed) continuous map and $\tilde f:S^4\to S^3$ its lift to the universal double cover $\pi:S^3\to SO_3$.  By (b),  the  composition  $\tilde \rho:=\rho\circ\pi:S^3\to SO_5$ has  an even Dynkin index (in fact, it is the same as the index of $\rho$, since $\pi$, being a cover, has index 1). 
In particular, $[\tilde\rho]=2[u]\in \pi_3(SO_5)$, for  some $u:S^3\to SO_5$. By (c), with $n=4, k=3, G=SO_5$, $[ \rho\circ f]=[\tilde \rho\circ \tilde f]=[\tilde \rho]\circ [\tilde f]=(2[u])\circ [\tilde f]=2([u]\circ [\tilde f])=0\in\pi_4(SO_5)\simeq\Z_2.$
$$\begin{tikzcd}
& S^3 \arrow{d}{\pi}\arrow{dr}{\tilde \rho}&\\
S^4\arrow{ur}{\tilde f} \arrow[swap]{r}{f} & SO_3 \arrow[swap]{r}{\rho}& SO_5
\end{tikzcd}
$$
\end{proof} 

A byproduct of the proof of Theorem \ref{thm:red} is the following corollary that could be of some interest to topologists.

\begin{cor}\label{cor:last}Suppose that the structure group of $S^n$ can be
 reduced to a closed connected subgroup $G\subsetneq SO_n$. If 
$n=4k+1\geq 5$,  but  $n\neq 9, 17$ or $133$, then $G$ is conjugate to  
the standard inclusion of  $SO_{4k}$, $U_{2k}$ or $SU_{2k}$ in  $SO_{4k+1}.$ 
For $n=9$, $G$ is conjugate to the standard inclusion of 
$SO_{8}$, $U_{4}$, $SU_{4}$ or  $Spin_7\subset SO_{8}$ in $SO_9$. 
\end{cor}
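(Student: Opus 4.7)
My proof will combine Theorem \ref{thm:red} with Borel's classical classification of closed connected subgroups of orthogonal groups acting transitively on spheres, followed by a topological-obstruction step to eliminate the surviving `quaternionic' candidates. First, since $G\subsetneq SO_n$ and $n\neq 133$, case (b) of Theorem \ref{thm:red} is ruled out, so $G$ acts reducibly on $\R^n$; case (a) then exhibits a conjugate of $G$ sitting inside $SO_{n-1}=SO_{4k}$ and acting transitively on $S^{4k-1}$. Next, I invoke the Montgomery--Samelson--Borel classification, which lists the closed connected subgroups of $SO_{4k}$ transitive on $S^{4k-1}$, up to conjugation, as $SO_{4k}$, $U_{2k}$, $SU_{2k}$, $Sp_k$, $Sp_k\cdot U_1$, $Sp_k\cdot Sp_1$, together with the two sporadic possibilities $Spin_7\subset SO_8$ (when $k=2$) and $Spin_9\subset SO_{16}$ (when $k=4$). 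For $k=1$ the three `quaternionic' groups degenerate ($Sp_1=SU_2$, $Sp_1\cdot U_1\cong U_2$, and $Sp_1\cdot Sp_1$ double-covers $SO_4$), so the case $n=5$ requires no further work.

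The main obstacle is the remaining step: for $k\geq 2$, ruling out the quaternionic reductions $Sp_k$, $Sp_k\cdot U_1$, $Sp_k\cdot Sp_1$. Because $Sp_k\subset Sp_k\cdot U_1\subset Sp_k\cdot Sp_1$ (modulo finite centers), it suffices to exclude the largest, $Sp_k\cdot Sp_1\subset SO_{4k}$. A reduction of the structure group of $S^n$ to $Sp_k\cdot Sp_1$ would amount to the characteristic map $\chi_n:S^{4k}\to SO_{4k+1}$ factoring, up to homotopy, through $Sp_k\cdot Sp_1\hookrightarrow SO_{4k}\hookrightarrow SO_{4k+1}$; equivalently, the class in $\pi_{4k}(SO_{4k})$ classifying the rank-$4k$ complement $\nu$ of a nonvanishing vector field on $S^{4k+1}$ would lift along $\pi_{4k}(Sp_k\cdot Sp_1)\to\pi_{4k}(SO_{4k})$. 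I would rule this out by an obstruction-theoretic calculation in the spirit of \cite{CC} (whose Corollary 2.2 gave Lemma \ref{lema:B}), computing the image of $\pi_{4k}(Sp_k\cdot Sp_1)\to\pi_{4k}(SO_{4k})$ via Chern and symplectic Pontryagin comparisons and showing that the required class lies outside it for all the dimensions covered by the statement.

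Finally, the two sporadic reductions $Spin_7\subset SO_8$ (when $k=2$, $n=9$) and $Spin_9\subset SO_{16}$ (when $k=4$, $n=17$) genuinely occur, arising respectively from the octonionic multiplication on $\R^8$ and from the octonionic Hopf fibration $S^{15}\to S^8$. This is precisely why the statement adds $Spin_7$ to the $n=9$ list and omits $n=17$ altogether.
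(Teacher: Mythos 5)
Your proposal follows the same skeleton as the paper's proof: apply Theorem \ref{thm:red} to land inside the standard $SO_{4k}\subset SO_{4k+1}$ acting transitively on $S^{4k-1}$, invoke the Montgomery--Samelson--Borel classification of transitive subgroups, then eliminate the quaternionic candidates $Sp_k$, $Sp_k\cdot U_1$, $Sp_k\cdot Sp_1$. Your handling of $k=1$ (the quaternionic groups degenerate to $SU_2$, $U_2$, $SO_4$) and your explanation of the sporadic cases $Spin_7\subset SO_8$ at $n=9$ and $Spin_9\subset SO_{16}$ at $n=17$ are both correct and a bit more explicit than the paper, which buries the $n=17$ exclusion in the phrase ``in the said dimensions.''

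Where you diverge is the elimination of the quaternionic groups, and here your logic is cleaner than the paper's phrasing: a reduction to a subgroup automatically gives a reduction to any containing group, so excluding the \emph{largest} group $Sp_k\cdot Sp_1$ kills all three --- that is the correct direction of implication. The paper instead writes that the three groups ``contain $Sp_k$, which is excluded by Theorem 2.1 of \cite{CC}''; read literally this is backwards (ruling out a subgroup says nothing a priori about its supergroups), so the paper is implicitly leaning on a formulation of \v{C}adek--Crabb's Theorem 2.1 that covers every group containing the standard $Sp_k\subset SO_{4k}$. However, you then leave the decisive step undischarged: you announce an obstruction computation comparing the image of $\pi_{4k}(Sp_k\cdot Sp_1)\to\pi_{4k}(SO_{4k})$ with the class of the rank-$4k$ complement bundle, but you neither carry it out nor cite a result that does. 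Since this is precisely the nontrivial content of the corollary, your proof has a genuine gap at this point; you would need either to perform the unstable $\pi_{4k}$ computation or to locate the exact statement in \cite{CC} (Theorem 2.1 is the one the paper uses) that disposes of $Sp_k\cdot Sp_1$ directly.
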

\begin{proof}  By Theorem \ref{thm:red}(b),  such a $G$ is conjugate to a subgroup of the standard inclusion  $SO_{4k}\subset SO_{4k+1}$, acting transitively on  $S^{4k-1}.$
The only closed connected subgroups $G\subset SO_{4k}$  acting  transitively on $S^{4k-1}$, in the said dimensions, are the standard linear actions  of   $ SO_{4k},$ $ U_{2k}, SU_{2k}, Sp_kSp_1, Sp_kU_1, Sp_k$ on $\R^{4k}=\C^{2k}=\H^k$, or the spin representation of $Spin_7$ on $\C^4$ (see, e.g., \cite[7.13, p.~179]{Be}). But the groups $Sp_kSp_1, Sp_kU_1, Sp_k$, $k\geq 1$, cannot occur  as structure groups of $S^{4k+1}$, since they contain  the last one, $Sp_k$, which is excluded by Theorem 2.1 of \cite{CC}.
\end{proof} 

\begin{rmk} For $n=17$, the group $Spin_9\subset SO_{16}$ acts transitively on $S^{15}$, but we do not know if the structure group of $S^{17}$ could be reduced to it. For $n=133$, as explained before, we do not know if the group $E_7\subset SO_{133}$ (or some subgroup of it acting irreducibly on $\R^{133}$) may appear as a reduction of the structure group of $S^{133}$.
\end{rmk}
\subsection{The Dynkin index}
Here we prove claim (b) from the proof of Lemma \ref{lema:D} of the previous subsection.  We begin with some background. 

Let $\rho:H\to G$ be a homomorphism of compact simple Lie groups. The third homotopy group of any simple Lie group is infinite cyclic (isomorphic to $\Z$),  hence the induced map $\rho_*:\pi_3(H)\to\pi_3(G)$   has 
a cyclic cokernel of order  $j\in\N$,  called the {\em Dynkin index} of $\rho$ (if $\rho_*=0$ then $j=0$, by definition). Clearly, $j$ is {\em multiplicative}, i.e., if  $\tilde H$ is a simple compact Lie group and $\pi:\tilde H\to H$ is a homomorphism, then $j( \rho\circ\pi)= j(\rho)j(\pi).$

There is a simple Lie algebraic expression for $j(\rho)$. To state it, the  Killing form on any simple compact Lie algebra needs to be normalized first  by  $\la\delta,\delta\ra=2$, where   $\delta$ is the longest root. Next, the pullback by $\rho:H\to G$ of the Killing form of $G$   is an $\Ad_H$-invariant quadratic form on the Lie algebra of $H$, hence, by simplicity of $H$,  is a non-negative multiple of the  Killing form of $H$. This multiple turns out to be precisely the Dynkin index of $\rho$.

\begin{teo}\label{thm:dyn} Let $\rho:H\to G$ be a homomorphism of
 compact simple Lie groups and $\rho_*:\h\to\g$ the induced Lie algebra homomorphism. Then
\begin{equation}\label{eq:ind}
\la \rho_*X, \rho_*Y\ra_\g=j(\rho)\la X, Y\ra_\h
\end{equation}
for all  $X,Y\in \h$.
\end{teo}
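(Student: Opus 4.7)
The plan is to translate the bilinear form identity \eqref{eq:ind} into a cohomological identity via the Cartan $3$-form, and then identify the resulting scalar with the Dynkin index by invoking the Hurewicz theorem on the $2$-connected universal cover of $G$.

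First, since $\h$ is simple and compact, the space of $\Ad_H$-invariant symmetric bilinear forms on $\h$ is one-dimensional, so the pullback $\rho^*\langle\cdot,\cdot\rangle_\g$ is a scalar multiple of $\langle\cdot,\cdot\rangle_\h$; call the scalar $\lambda$, so the goal becomes $\lambda=j(\rho)$. I would then transport this identity to the closed biinvariant Cartan $3$-form $\omega_\g(X,Y,Z):=\langle X,[Y,Z]\rangle_\g$ on $G$ (alternating in $X,Y,Z$ by $\Ad$-invariance of $\langle\cdot,\cdot\rangle_\g$), and analogously $\omega_\h$ on $H$. A short computation using $\rho_*[X,Y]=[\rho_*X,\rho_*Y]$ yields $\rho^*\omega_\g=\lambda\,\omega_\h$.

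Next, replace $G,H$ by their universal covers $\tilde G,\tilde H$; this preserves $\pi_3$, the Lie algebra, and the invariant form, hence both $\lambda$ and $j(\rho)$. A simply connected compact simple Lie group is $2$-connected (any connected Lie group has $\pi_2=0$), so by Hurewicz $\pi_3(\tilde G)\cong H_3(\tilde G;\Z)\cong\Z$, and by universal coefficients $H^3(\tilde G;\Z)\cong\Z$. Since biinvariant forms on a compact Lie group inject into de Rham cohomology (Chevalley-Eilenberg), $H^3(\tilde G;\R)$ is one-dimensional and generated by $[\omega_\g]$. Thus $\rho^*[\omega_\g]=\lambda[\omega_\h]$ in $H^3(\tilde H;\R)$. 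If the normalization $\langle\delta,\delta\rangle=2$ makes $[\omega_\g]$ equal to a universal real constant $C$ (independent of the simple $\g$) times the integral generator of $H^3(\tilde G;\Z)$, then $\rho^*$ acts by multiplication by $\lambda$ on the integral generators as well, and Hurewicz identifies this with multiplication by $j(\rho)$ on $\pi_3$ by the very definition of $j$.

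The main obstacle is the integrality/universal-constant claim for the Cartan $3$-form under the normalization $\langle\delta,\delta\rangle=2$. My approach would be to reduce it to a single computation on $SU_2\cong S^3$. Choose the $\su_2$ subalgebra of $\g$ spanned by root vectors $E_\delta,F_\delta$ together with the coroot $H_\delta$ of a longest root $\delta$; this gives a homomorphism $\iota:SU_2\to\tilde G$. Classically $\iota_*$ realizes a generator of $\pi_3(\tilde G)\cong\Z$, so $j(\iota)=1$; simultaneously, the normalization $\langle\delta,\delta\rangle=2$ is calibrated precisely so that $\iota^*\langle\cdot,\cdot\rangle_\g$ equals the basic form on $\su_2$, giving $\iota^*\omega_\g=\omega_{\su_2}$. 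An explicit integration on $SU_2\cong S^3$ then fixes the constant $C$ for which $[\omega_{\su_2}/C]$ generates $H^3(SU_2;\Z)$, and the manifest multiplicativity of both $\lambda$ and $j$ under composition of homomorphisms propagates the identification $\lambda=j(\rho)$ from $\iota$ to every simple $\g$ and every $\rho$.
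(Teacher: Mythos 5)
The paper does not actually prove Theorem~\ref{thm:dyn}; it simply cites Dynkin's original paper \cite{Dy} for the Lie-algebraic formula~\eqref{eq:ind} (which Dynkin took as his \emph{definition} of the index and then showed to be an integer) and cites Onishchik \cite{On} for the equivalence with the homotopy-theoretic definition used in Lemma~\ref{lema:D}. Your sketch is a correct outline of the standard argument that bridges the two definitions. The key ingredients are all there and in the right place: the ratio $\lambda$ of invariant forms equals the ratio of classes $[\rho^*\omega_\g]/[\omega_\h]$ of the Cartan $3$-forms via $\rho_*[X,Y]=[\rho_*X,\rho_*Y]$; passing to universal covers makes Hurewicz identify $\pi_3\cong H_3\cong\Z$ and dualize to $H^3\cong\Z$, so $\rho^*$ on the integral generators is exactly multiplication by $j(\rho)$ (up to sign, which positivity of $\lambda$ and of $j$ removes); and the universality of the constant relating $[\omega_\g]$ to the integral generator is pinned down by restricting to the long-root $SU_2\hookrightarrow\tilde G$, where the normalization $\la\delta,\delta\ra=2$ forces the pullback form to be the basic form on $\su_2$, and Bott's theorem supplies $j(\iota)=1$. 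The one thing worth flagging is that the final ``multiplicativity propagates the identification'' remark is not really the right logical glue: once you have shown $C_G=C_{SU_2}$ for every simple $G$, the general case $\lambda(\rho)=j(\rho)$ drops out immediately by comparing $\rho^*[\omega_\g]=\lambda[\omega_\h]$ with $\rho^*\eta_G=j(\rho)\eta_H$; multiplicativity of $j$ and $\lambda$ is a corollary of the theorem, not a step in its proof. Also bear in mind that the nontrivial input ``$\iota_*$ generates $\pi_3(\tilde G)$'' is a real theorem (Bott, or Bott--Samelson) and should be cited as such, not dismissed as ``classical.''
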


In fact, Dynkin {\em defined}  $j(\rho)$ via Formula \ref{eq:ind} (see \cite[formula (2.2), p.~130]{Dy}), and showed in the same article that $j(\rho)$ is an integer, without reference to its topological interpretation. Later, it was shown  to have an equivalent definition via homotopy groups, as given above  (we are not sure who proved it first, we learned it from  \cite{On}, \S 2 of Chapter 5, p.~257).

\begin{lema} $j(\rho)=10$ for the irreducible representation $\rho:SO_3\to SO_5$.
\end{lema}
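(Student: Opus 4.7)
The plan is to compute $j(\rho)$ directly from the defining formula in Theorem \ref{thm:dyn} by evaluating both normalized invariant forms on a single convenient Cartan element. It suffices to exhibit a nonzero $H \in \mathfrak{h}$ in (the complexification of) a Cartan subalgebra of $\mathfrak{so}_3$ and compute
\[
j(\rho) \;=\; \frac{\la \rho_* H,\rho_* H\ra_{\mathfrak{so}_5}}{\la H,H\ra_{\mathfrak{so}_3}}.
\]
The only non-routine ingredient is the correct normalization of the two invariant forms so that the longest root has squared length $2$; the rest reduces to weight bookkeeping.

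First I would fix the normalizations. For $\mathfrak{so}_n$ with $n\geq 3$ the normalized invariant form can be written as $\la X,Y\ra = c_n\,\tr(XY)$ in the defining vector representation. Taking in each case a Cartan element $H$ whose eigenvalues on the vector representation are $\pm 1, 0, \ldots, 0$ and writing the corresponding coroot explicitly, a short linear-algebra calculation (identifying $\h \cong \h^*$ via $\la\cdot,\cdot\ra$ and imposing $\la\alpha,\alpha\ra = 2$ on the long root) gives $c_3 = 1/4$ for $\mathfrak{so}_3$ (where $A_1$ has a single root $\alpha$ with $\alpha(H)=1$, since $\mathfrak{so}_3 \cong \mathfrak{sl}_2$ doubles the weights) and $c_5 = 1/2$ for $\mathfrak{so}_5$ (of type $B_2$, with long roots $\pm e_1 \pm e_2$).

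Next I would choose $H \in \mathfrak{so}_3^{\mathbb{C}}$ with eigenvalues $1,0,-1$ in the vector representation, so that $\tr_{\text{vec}}(H^2) = 2$ and hence $\la H,H\ra_{\mathfrak{so}_3} = \tfrac{1}{4}\cdot 2 = \tfrac{1}{2}$. The $5$-dimensional irrep $\rho$ is the unique irreducible $\mathfrak{so}_3$-module of highest weight $2$ (equivalently the spin-$2$ representation $\mathrm{Sym}^4(\C^2)$ of $\mathfrak{su}_2$, or the traceless symmetric $3\times 3$ tensors), whose weights relative to $H$ are $2,1,0,-1,-2$. Therefore $\tr_V(\rho(H)^2) = 4+1+0+1+4 = 10$, giving $\la \rho_*H,\rho_*H\ra_{\mathfrak{so}_5} = \tfrac{1}{2}\cdot 10 = 5$.

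Combining the two computations via Theorem \ref{thm:dyn} yields
\[
j(\rho) \;=\; \frac{5}{1/2} \;=\; 10,
\]
as claimed. The main pitfall to guard against is the normalization step, specifically the factor-of-$2$ discrepancy between $\mathfrak{so}_3$ and $\mathfrak{su}_2$ coming from the double cover $SU_2 \to SO_3$; with a clean choice of $H$ and the correct constants $c_3$ and $c_5$, the weight calculation is immediate.
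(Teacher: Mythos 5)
Your proof is correct and takes essentially the same approach as the paper: normalize the invariant forms so the long root has squared length $2$, then compute the Dynkin index as the ratio $2\,\tr[(\rho_*H)^2]/\tr(H^2) = 2\cdot 10/2 = 10$ by reading off the weights $\pm 2,\pm 1,0$ of the $5$-dimensional (spin-$2$) representation. The only cosmetic difference is that the paper works directly with a real antisymmetric $X\in\so_3$ (eigenvalues $\pm i,0$, normalizing constants $-1/4$ and $-1/2$) whereas you use a complexified Cartan element with real eigenvalues and constants $+1/4,+1/2$; the sign discrepancy cancels in the ratio, so both routes land on $j(\rho)=10$.
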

\begin{proof} Theorem \ref{thm:dyn} gives an easy to follow  recipe for $j$. To apply it, one needs to  compute first the normalization of the Killing forms of $SO_3$ and $SO_5$. 

 Let $\so_5$  be the set of $5 \times 5$ antisymmetric real matrices, the Lie algebra of $SO_5,$  with $\t\subset\so_5 $ the set  of block diagonal
matrices of the form $\left( x_1J\oplus  x_2J\oplus 0\right)$, where
$J=\left(\begin{smallmatrix}0&-1\\ 1&0\end{smallmatrix}\right)$. The roots are 
$\pm x_1\pm x_2, \pm x_1,\pm x_2$, with  $\delta:=x_1+x_2$. 
Since $\tr(XY)$ is clearly an $\Ad$-invariant non-trivial bilinear form on $\so_5$, the normalized Killing form of $\so_5$ is of the form $\la X, Y\ra=\lambda\, \tr(XY),$ for some $\lambda\in \R$. The normalization condition is $\la\delta^\flat,\delta^\flat\ra=2,$ where $\delta^\flat\in\t$ is defined via $\delta(X)=\la\delta^\flat, X\ra$ for all $X\in \t$. Let $\delta^\flat=\lambda'(J\oplus J\oplus 0),$ for some $\lambda'\in \R$. Then for all $X \in \t$, $\la \delta^\flat,X\ra
 =\lambda\tr(\delta^\flat X)=
 -2\lambda\lambda' \delta(X)$, thus $-2\lambda\lambda'=1,$ 
 so $\delta^\flat=-{1\over 2\lambda}(J\oplus J\oplus 0)$ and 
 $2=\la \delta^\flat, \delta^\flat\ra=\lambda\tr[(\delta^\flat)^2]=-1/\lambda$, hence $\lambda=-1/2$. It follows that  
 $\la X,Y\ra_{\mathfrak{so}_5} =-\tr(XY)/2.$ 
For $\so_3$ we get by a similar argument  
 $\la X,Y\ra_{\mathfrak{so}_3} =-\tr(XY)/4.$ 
 
Now let $\rho:SO_3\to SO_5$ be the 5-dimensional irreducible representation on $\R^5$ (conjugation of traceless symmetric $3\times 3$ matrices).  Let $X=(J\oplus 0)\in\so_3$.  To calculate $\tr[(\rho_*X)^2]$,  we let $X$ act on $S^2((\C^3)^*)$ (complexifying, passing to the dual and adding an extra trivial summand does not affect trace). Now $x_1\pm ix_2,  x_3$ are  $X$ eigenvectors in $(\C^3)^*$, with eigenvalue $\pm i, 0$, hence  the  eigenvalues of the $\rho_*X$ action on $S^2((\C^3)^*)$ are  $\pm 2i, \pm i,0,0$, and those of $(\rho_*X)^2
$ are $-4,-4,-1,-1,0,0$, giving $\tr[(\rho_*X)^2]=-10.$ Thus $j(\rho)=\la \rho_*X,\rho_*X\ra_{\so_5}/\la X,X\ra_{\so_3}=2\,\tr[(\rho_*X)^2]/\tr(X^2)=10,$ as claimed. 
\end{proof}

\end{document}